\newcommand{\N}{\ensuremath{\mathbb{N}}}
\newcommand{\Z}{\ensuremath{\mathbb{Z}}}
\newcommand{\Q}{\ensuremath{\mathbb{Q}}}
\theoremstyle{plain}
\newtheorem{thm}{Theorem}[section]
\newtheorem{lem}[thm]{Lemma}
\newtheorem{cor}[thm]{Corollary}
\newtheorem{question}[thm]{Question}
\newtheorem{conj}[thm]{Conjecture}
\newtheorem*{fact}{Fact}
\DeclareMathOperator{\Cl}{Cl}
\DeclareMathOperator{\Gal}{Gal}
\DeclareMathOperator{\Disc}{Disc}
\DeclareMathOperator{\rank}{rk}
\DeclareMathOperator{\ord}{ord}
\DeclareMathOperator{\divisor}{div}
\DeclareMathOperator{\Jac}{Jac}
\DeclareMathOperator{\Sel}{Sel}
\newcommand{\PP}{\mathbb{P}}
\newcommand{\tors}{\text{\rm tors}}
\begin{document}

\title{A geometric approach to large class groups: a survey}

\author{Jean Gillibert \and Aaron Levin \thanks{The second author was supported in part by NSF grant DMS-1102563.}}

\date{November 2019}

\maketitle

\begin{abstract}
The purpose of this note is twofold. First, we survey results from \cite{levin07}, \cite{gl12} and \cite{bg18} on the construction of large class groups of number fields by specialization of finite covers of curves. Then we give examples of applications of these techniques.
\end{abstract}


\section{The survey}

The geometric techniques we shall report on are in fact explanations of geometric nature of a strategy which has been used from the beginning of the subject. We hope to convince the reader that this geometric viewpoint has many advantages. In particular, it clarifies the general strategy, and it allows one to obtain quantitative results.
Furthermore, it raises new questions concerning torsion subgroups of Jacobians of curves defined over number fields.

Let us point out that, for simplicity, we focus here on geometric techniques related to covers of curves. Similar results hold for covers of arbitrary varieties, see \cite{levin07} and \cite{gl12}, at the price of greater technicalities. The advantage of considering arbitrary varieties is that it provides a general framework to explain all constructions from previous authors, without exception.


\subsection{Large class groups: the folklore conjecture}

If $M$ is a finite abelian group, and if $m>1$ is an integer, we define the $m$-rank of $M$ to be the maximal integer $r$ such that $(\Z/m\Z)^r$ is a subgroup of $M$; we denote it by $\rank_m M$. If $k$ is a number field, we let $\Cl(k)$ denote the ideal class group of $k$, and $\Disc(k)$ denote the (absolute) discriminant of $k$.

The following conjecture is widely believed to be true.

\begin{conj}
\label{conj1}
Let $d>1$ and $m>1$ be two integers. Then $\rank_m\Cl(k)$ is unbounded when $k$ runs through the number fields of degree $[k:\Q]=d$. 
\end{conj}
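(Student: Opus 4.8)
The plan is to reduce Conjecture~\ref{conj1} to a geometric existence problem about curves and then to pass from geometry to arithmetic by specialization. Fix $d>1$ and $m>1$. Since we only need $\rank_m\Cl(k)$ to be unbounded, it suffices to produce, for every $r$, infinitely many number fields $k$ with $[k:\Q]=d$ and $\rank_m\Cl(k)\geq r$. I would realise such fields as fibres of a single family: look for a smooth projective geometrically connected curve $X/\Q$ equipped with (i) a morphism $\psi\colon X\to\PP^1$ of degree $d$ defined over $\Q$, and (ii) a geometrically connected finite \'etale abelian cover $\phi\colon Y\to X$ over $\Q$ with Galois group $(\Z/m\Z)^r$. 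By Galois descent, the existence of $\phi$ amounts to producing a large enough $\Q$-rational subquotient of $\Jac(X)[m]$; a convenient sufficient condition (when $\mu_m\subset\Q$) is $\rank_m\Jac(X)(\Q)\geq r$, that is, $r$ independent rational points of order $m$ on $\Jac(X)$. Note that this already forces $g(X)\geq r/2$.

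Granting such a pair $(\psi,\phi)$, the rest of the argument is comparatively soft. Applying Hilbert's irreducibility theorem to the geometrically connected cover $Y\to X\to\PP^1$ of degree $dm^r$, one finds a thin set outside of which the fibres $\psi^{-1}(t)=\Spec k_t$ and $(\psi\circ\phi)^{-1}(t)=\Spec L_t$ are spectra of number fields, with $[k_t:\Q]=d$ and $L_t/k_t$ abelian with Galois group $(\Z/m\Z)^r$; standard arguments (using Faltings' theorem, since one may take $g(X)\geq 2$) then show that infinitely many of the $k_t$ are pairwise non-isomorphic. The Chevalley--Weil theorem, applied to the \'etale cover $\phi$, produces a finite set $S$ of rational primes depending only on $\phi$ such that every such extension $L_t/k_t$ is unramified outside the primes of $k_t$ lying over $S$. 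Finally, class field theory yields $\rank_m\Cl(k_t)\geq\rank_m\Cl_S(k_t)\geq r-c$, where $c$ depends only on $d$ and $\#S$ and accounts for the finitely many primes above $S$ and the archimedean places; since $\#S$, hence $c$, is fixed by the chosen cover, letting $r\to\infty$ through these constructions forces $\rank_m\Cl(k_t)$ to be unbounded.

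The genuine difficulty --- and the reason Conjecture~\ref{conj1} remains open --- is step (ii): for a \emph{prescribed} degree $d$, one must exhibit curves $X/\Q$ carrying a degree-$d$ map to $\PP^1$ whose Jacobians have $m$-rank over $\Q$ as large as one wishes. Over $\overline{\Q}$ this is easy (take a high-genus curve with many $m$-torsion points, or an \'etale $(\Z/m\Z)^r$-cover of an auxiliary curve), but forcing the full \emph{rational} structure on the $m$-torsion while retaining the rational degree-$d$ map is precisely the problem of constructing large rational torsion subgroups of Jacobians of curves --- the question highlighted in the introduction, and one to which no uniform answer is known. One can nevertheless make progress case by case: hyperelliptic curves $y^2=f(x)$ with many rational Weierstrass points, together with genus theory (and its refinements for higher $2$-power $m$), for $d=2$; superelliptic curves $y^n=f(x)$, fibre products of lower-degree covers, and curves arising from elliptic curves with prescribed torsion for other small $d$; and quotients of curves with large automorphism groups to keep the genus under control. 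This is the substance of \cite{levin07}, \cite{gl12} and \cite{bg18}, but a construction valid for all $d$ and all $m$ is not currently available, and supplying one is where the main obstacle lies.
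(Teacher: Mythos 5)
The statement you are asked about is a \emph{conjecture}, and the paper does not prove it: it states explicitly that when $m$ and $d$ are coprime not a single case of Conjecture~\ref{conj1} is known. Your proposal is not a proof either, and to your credit you say so in the last paragraph. What you have written is a reduction of the conjecture to a geometric existence problem, and that reduction is essentially the one the paper itself carries out: your step ``granting such a pair $(\psi,\phi)$'' is Theorem~\ref{thold} (and, in the hyperelliptic case, Corollary~\ref{HypCor}), proved via Chevalley--Weil, Hilbert irreducibility, and the Kummer/class-field-theoretic exact sequence relating $\Sel^m$, units, and $\Cl[m]$. Two small technical remarks on that soft half: the paper's hypothesis is not just a degree-$d$ map $\psi\colon X\to\PP^1$ but one \emph{totally ramified over a rational point} (or the more technical Selmer condition of Theorem~\ref{variant1}); without some such condition the Chevalley--Weil argument does not give integrality of the relevant functions at the bad primes. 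Also, the defect term in the paper is $-\rank_{\Z}\mathcal{O}_k^{\times}$, which is bounded by $d-1$ and so harmless for the unboundedness statement, consistent with your ``$r-c$''.

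The genuine gap is exactly the one you name: for fixed $d$ and $m$ one must produce curves $X/\Q$ carrying a rational degree-$d$ map to $\PP^1$ (totally ramified over a rational point) with $\rank_m\Jac(X)(\Q)_{\tors}$ (or $\rank_{\mu_m}\Jac(X)$) arbitrarily large. This is Question~\ref{Q1} of the paper for $d=2$, and it is open for every $m>2$; the best known general bound is $\rank_m\Jac(C)(\Q)_{\tors}\geq 2$ (Yamamoto's curves), with isolated improvements such as $4$ for $m=3$ (Craig) and $\rank_{\mu_5}\geq 3$ (Mestre). Because this input is missing, the proposal establishes nothing beyond what the paper already records, and in particular does not prove the conjecture. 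One further caution about your sufficient condition for step (ii): requiring $\mu_m\subset\Q$ is vacuous for $m>2$, and over $\Q$ one must be careful that a subgroup $(\Z/m\Z)^r\subset\Jac(X)(\Q)$ yields a \emph{geometrically connected} $(\Z/m\Z)^r$-cover defined over $\Q$ via the dual $\mu_m^r$ (this is the distinction between Theorems~\ref{thold} and~\ref{thnew}); conflating $\rank_m\Jac(X)(\Q)$ with $\rank_{\mu_m}\Jac(X)$ would be an error if one were trying to make the archimedean/unit defect disappear.
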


When $m=d$, and more generally when $m$ divides $d$, this conjecture follows easily from Class Field Theory. On the other hand, when $m$ and $d$ are coprime, there is not a single case where Conjecture~\ref{conj1} is known to hold.

When constructing families of degree $d$ fields $k$ with a given lower bound on $\rank_m\Cl(k)$,  it is natural to count the number of fields constructed, ordered by discriminant. This is the quantitative aspect of Conjecture~\ref{conj1}.

For a detailed account of qualitative results towards Conjecture~\ref{conj1}, and a discussion of quantitative results, see \S{}\ref{tablesofresults}.
 

\subsection{A toy example}

In order to give a flavor of our technique, we revisit a classical construction.

\begin{fact}
Let $m\geq 3$ be an odd integer. For infinitely many odd $x\in\N$, the imaginary quadratic field $k=\Q(\sqrt{1-x^m})$ satisfies $\rank_m\Cl(k)\geq 1$.
\end{fact}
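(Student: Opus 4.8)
The plan is to produce, from the factorization of $1-x^m$ in $k$, an ideal whose class has order exactly $m$. Set $y=\sqrt{1-x^m}$, an algebraic integer of $k$ with $y^2=1-x^m$, so that $(1-y)(1+y)=1-y^2=x^m$ in $\mathcal{O}_k$. Since $x$ is odd, the ideals $(1-y)$ and $(1+y)$ are coprime: a common prime divisor would lie over a rational prime dividing both $2=(1-y)+(1+y)$ and $x^m$, which is impossible. Comparing these two coprime factors with the $m$-th power $(x)^m$ and invoking unique factorization of ideals, there is an integral ideal $\mathfrak{a}$ with $(1-y)=\mathfrak{a}^m$ (and $(1+y)=\bar{\mathfrak{a}}^m$). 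Hence $[\mathfrak{a}]\in\Cl(k)$ has order dividing $m$, and it remains only to show that for infinitely many odd $x$ this order is exactly $m$; that gives an embedding $\Z/m\Z\hookrightarrow\Cl(k)$, i.e.\ $\rank_m\Cl(k)\geq 1$. (Note $k$ is genuinely an imaginary quadratic field, since $1-x^m<0$ is not a square.)

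The crux, which I expect to be the main obstacle, is this non-triviality: one must rule out $\mathfrak{a}^{m/p}$ being principal for each prime $p\mid m$. Suppose $\mathfrak{a}^{m/p}=(\gamma)$ with $\gamma\in\mathcal{O}_k$. For all but finitely many $x$ we have $\mathcal{O}_k^\times=\{\pm1\}$ (the alternatives $\Q(i)$ and $\Q(\sqrt{-3})$ would force $x^m-1$ to be a square, or three times a square, and for fixed $m\geq 3$ each happens for only finitely many $x$, by the finiteness of integral points on the relevant curve of positive genus); since $p$ is odd we may absorb the unit and assume $1-y=\gamma^p$. Conjugating, and using $\bar y=-y$, gives $1+y=\bar\gamma^p$, whence
\[
\gamma^p+\bar\gamma^p=2,\qquad N_{k/\Q}(\gamma)^p=(1-y)(1+y)=x^m,
\]
so $n:=N_{k/\Q}(\gamma)=x^{m/p}$, which is large once $x$ is.

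To finish I would appeal to the elementary properties of the integer sequence $P_j:=\gamma^j+\bar\gamma^j$, governed by $P_0=2$, $P_1=t$, $P_{j+1}=tP_j-nP_{j-1}$ with $t:=\gamma+\bar\gamma\in\Z$. Since $p$ is odd, $P_p$ vanishes identically when $t=0$, so $t\mid P_p$ in $\Z[t,n]$, and $P_p/t$ viewed as a polynomial in $n$ has degree $(p-1)/2\geq 1$ with nonzero leading coefficient $\pm p$. The equation $P_p=2$ then forces $t\mid 2$, hence $|t|\leq 2$; but for bounded $t$ and $n\to\infty$ one has $|P_p/t|=p\,n^{(p-1)/2}+O(n^{(p-3)/2})\to\infty$, contradicting $|P_p/t|=|2/t|\leq 2$. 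Therefore, for every sufficiently large odd $x$, no such $\gamma$ exists, $[\mathfrak{a}]$ has order exactly $m$, and the Fact follows. It is worth noting that $\mathfrak{a}$ is precisely the specialization at $u=x$ of a natural $m$-torsion class on the Jacobian of the curve $v^2=1-u^m$, so the hands-on argument above is the toy instance of the specialization-of-torsion mechanism that the survey formalizes and renders quantitative.
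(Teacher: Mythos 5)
Your argument is correct, and it follows the first (``classical'') of the two proofs the paper sketches: the factorization $x^m=(1-y)(1+y)$ into coprime ideals, the extraction of $\mathfrak{a}$ with $\mathfrak{a}^m=(1-y)$, and the reduction of the exact-order claim to the insolubility of $1-y=\gamma^p$ for primes $p\mid m$. Where you genuinely diverge is at what the paper calls the hardest step: the paper's classical proof outsources the insolubility of $1-y=\gamma^p$ to external Diophantine results (Murty's bound on the square factor of $1-x^m$, or the Nagell--Cohn analysis valid for all odd $x\geq 5$), whereas you prove it from scratch via the Lucas sequence $P_j=\gamma^j+\bar\gamma^j$. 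Your key observation --- that for this particular family the sum of the two conjugate factors is the \emph{constant} $2$, so that $P_p(t,n)=2$ with $t=\operatorname{Tr}(\gamma)$, $n=N(\gamma)=x^{m/p}$, forcing $t\mid 2$ (as $t$ divides $P_p$ identically for odd $p$) and then $|P_p/t|=p\,n^{(p-1)/2}+O(n^{(p-3)/2})>2$ for large $x$ --- is sound, and in effect reconstructs Nagell's argument; the checks that $t=0$ is impossible and that $\mathcal{O}_k^\times=\{\pm1\}$ outside finitely many $x$ are handled adequately. What your route buys is a self-contained elementary proof valid for all sufficiently large odd $x$; what it does not give is what the paper is actually after: the geometric proof (torsion in $\operatorname{Jac}(y^2=1-x^m)$, Chevalley--Weil, Hilbert irreducibility) replaces exactly this \emph{ad hoc} Diophantine step by a soft application of Hilbert's irreducibility theorem, generalizes to arbitrary curves where no such constant-sum miracle is available, and yields the quantitative count $\gg X^{1/m}/\log X$ of fields ordered by discriminant.
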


\subsubsection*{Classical proof}

If $y=\sqrt{1-x^m}$, then $y^2=1-x^m$ and $x^m=(1-y)(1+y)$. The ideal $(1-y,1+y)$ divides $2$, but is also coprime to $2$ because $x$ is odd. Thus $1-y$ and $1+y$ are coprime, and their product is an $m$-th power; hence each of them generates an ideal that is the $m$-th power of some ideal of $\mathcal{O}_k$. So there exists an ideal $\mathfrak{a}$ of $\mathcal{O}_k$ such that $(1-y)=\mathfrak{a}^m$. Therefore, the class of $\mathfrak{a}$ in $\Cl(k)$ has order dividing $m$.

The next (and hardest) step is to prove that, if some additional condition is satisfied, then the class of $\mathfrak{a}$ in $\Cl(k)$ has exact order $m$. Let us assume that the class of $\mathfrak{a}$ has order $q<m$, i.e. there exists $\alpha\in \mathcal{O}_k$ such that $\mathfrak{a}^q=(\alpha)$. One can write $m=q\ell$ for some odd $\ell>1$. It follows that $(\alpha)^\ell=(1-y)$, and hence there exists a unit $\varepsilon$ of $k$ such that $\varepsilon \alpha^\ell=1-y$. Assuming that the square-free part of $1-x^m$ is strictly smaller than $-3$, the units of the imaginary quadratic field $k$ are $\{\pm 1\}$, and hence $\alpha^\ell=1-y$ up to sign change. The result ultimately relies on the inexistence of solutions to this Diophantine equation, which one achieves by putting additional conditions on $x$. For example, Ram Murty \cite{Mur98} has shown that, if the square factor of $1-x^m$ is less than $x^{m/4}/2\sqrt{2}$, then this equation has no solution. The same result is proved in \cite{CHKP} under the condition that $x$ is a prime number $\geq 5$. Eventually, as was pointed out by Cohn \cite{Cohn03}, it follows from the work of Nagell  \cite{Nag55} that, for any odd number $x\geq 5$, one has $\rank_m\Cl(k)\geq 1$.

Quantitative results are usually obtained by \emph{ad hoc} techniques of analytic number theory, depending on the required condition on $x$.

\subsubsection*{Geometric proof}

Let $C$ be the smooth, projective, geometrically irreducible hyperelliptic curve over $\Q$ defined by the affine equation
$$
y^2=1-x^m.
$$

Let $T\in C(\Q)$ be the point with affine coordinates $(0,1)$. The integer $m$ being odd, the curve $C$ has a unique point at infinity, that we denote by $\infty$. The divisor of the rational function $1-y$ is given by
$$
\divisor(1-y)=mT-m\infty,
$$
which proves that the class of the divisor $T-\infty$ defines a rational point of order dividing $m$ in the Jacobian of $C$. It is not very hard to check that in fact this divisor class has exact order $m$ in the Jacobian of $C$.

A brief reminder on ramification in Kummer extensions : let $K$ be a local field with valuation $v$, let $m>1$ be an integer, and let $\gamma\in K^{\times}$. If  the Kummer extension $K(\sqrt[m]{\gamma})/K$ is unramified at $v$, then $v(\gamma)\equiv 0\pmod{m}$. Conversely, if $v(\gamma)\equiv 0\pmod{m}$ and the residue characteristic of $K$ is coprime to $m$, then $K(\sqrt[m]{\gamma})/K$ is unramified at $v$.

The valuation of $1-y$ at each place of $C$ is a multiple of $m$; hence the function field extension $\Q(C)(\sqrt[m]{1-y})/\Q(C)$ is unramified at each place of $C$. Therefore, this extension corresponds to an {\'e}tale cover of $C$, that we denote by $f:\tilde{C}\to C$. This is a geometrically connected cover of degree $m$, because the class of $T-\infty$ has order $m$ in the Jacobian of $C$.

Let us consider a point $P\in C(\overline{\Q})$ satisfying the following properties:
\begin{enumerate}
\item[$(i)$] for each finite place $v$ of $\Q(P)$, $v(1-y(P))\equiv 0\pmod{m}$;
\item[$(ii)$] $[\Q(\sqrt[m]{1-y(P)}):\Q(P)]=m$;
\item[$(iii)$] $\Q(P)$ is linearly disjoint from the $m$-th cyclotomic field $\Q(\mu_m)$.
\end{enumerate}

Then we claim that $\rank_m \Cl(\Q(P))\geq 1-\rank_{\Z} \mathcal{O}_{\Q(P)}^{\times}$.
In order to prove this, let us define
\begin{equation}
\label{Sm}
\Sel^m(\Q(P)):=\{\gamma\in\Q(P)^\times/(\Q(P)^\times)^m; \text{$\forall v$ finite place of $\Q(P)$, $v(\gamma)\equiv 0\pmod{m}$}\} 
\end{equation}
which is an analogue of the Selmer group for the multiplicative group over $\Q(P)$. Then we have an exact sequence
\begin{equation*}
\begin{CD}
1 @>>> \mathcal{O}_{\Q(P)}^{\times}/\big(\mathcal{O}_{\Q(P)}^{\times}
\big)^m @>>> \Sel^m(\Q(P)) @>>> \Cl(\Q(P))[m] @>>> 0. \\
\end{CD}
\end{equation*}
By condition $(i)$, the element $1-y(P)$ defines a class in $\Sel^m(\Q(P))$, which has exact order $m$ by condition $(ii)$. It follows from condition $(iii)$ that $\rank_m \mathcal{O}_{\Q(P)}^{\times}/(\mathcal{O}_{\Q(P)}^{\times})^m=\rank_{\Z} \mathcal{O}_{\Q(P)}^{\times}$. Therefore, by considering $m$-ranks in the exact sequence above, one obtains the result.

We shall now prove the existence of infinitely many points $P\in C(\overline{\Q})$ satisfying $(i)$, $(ii)$ and $(iii)$, and such that $\Q(P)$ is an imaginary quadratic field. It follows from Dirichlet's unit theorem that $\rank_m \Cl(\Q(P))\geq 1$ for such fields. On the other hand, if $\Q(P)$ is a real quadratic field, then this machinery does not yield any result, because we don't have a way to ensure that $1-y(P)$ is not a unit modulo $m$-th powers.

It follows from an appropriate version of the Chevalley-Weil theorem that, if $p$ is a prime of good reduction of $C$, then for any  point $P\in C(\overline{\Q})$, the extension $\Q(\sqrt[m]{1-y(P)})/\Q(P)$ is unramified at all places $v$ dividing $p$. For such $v$, the condition $v(1-y(P))\equiv 0\pmod{m}$ is satisfied, according to the Kummer criterion.

An immediate application of the Jacobian criterion of smoothness shows that the primes of bad reduction of $C$ are the primes dividing $2m$. We shall now deal with local conditions at such primes $p$. Let $P_0$ be the rational point of $C$ with affine coordinates $(1,0)$. Then $P_0$ is a ramification point of $x$, and $y(P_0)=0$. Let $P\in C(\overline{\Q})$ be a point which is $p$-adically close enough to $P_0$, by which we mean that, for each place $v$ of $\Q(P)$ dividing $p$, the point $P$ is close to $P_0$ for the $v$-adic topology on $C(\Q(P)_v)$. Then by elementary considerations $y(P)$ is $p$-adically close to $y(P_0)$, hence to $0$. Therefore, if $P$ is $p$-adically close enough to $P_0$, then $v(1-y(P))=0$ for each place $v$ of $\Q(P)$ dividing $p$.

Let $\Delta$ be the product of bad primes, and let $\phi:C\to\PP^1$ be the rational map defined by $\phi=\frac{x-1}{\Delta^N}$ for some integer $N$ large enough. The map $\phi$ being totally ramified at $P_0$, one can see that, for all $t\in\N$ and all bad primes $p$, the point $P_t:=\phi^{-1}(t)$ is $p$-adically close enough to the point $P_0$. Then the discussion above proves that all points $P_t$ with $t\in\N$ satisfy condition $(i)$.  In fact, it was shown in the classical proof that the map $\phi=\frac{x-1}{2}$ does the job, so we shall use that one instead.

Applying Hilbert's irreducibility theorem to the composite cover of degree $2m$
$$
\begin{CD}
\tilde{C} @>f>> C @>\phi >> \PP^1 \\
\end{CD}
$$
we obtain the existence of infinitely many $t\in\N$ such that $[\Q(f^{-1}(P_t)):\Q]=2m$. For such $t$, the field $\Q(P_t)$ is quadratic, and $\Q(f^{-1}(P_t))=\Q(\sqrt[m]{1-y(P_t)})$ is an extension of degree $m$ of $\Q(P_t)$. Hence condition $(ii)$ is satisfied. Moreover, $\Q(P_t)=\Q(\sqrt{1-(2t+1)^m})$ is imaginary quadratic, hence $(iii)$ holds (unless $3\mid m$ and $\Q(P)=\Q(\sqrt{-3})$, which we exclude). This concludes the proof of the statement.

Finally, it follows from a quantitative version of Hilbert's irreducibility theorem, due to Dvornicich and Zannier \cite{Zan}, that, given $X>0$, there exist $\gg X^{\frac{1}{m}}/\log X$ imaginary quadratic fields $\Q(P_t)$ with discriminant $|\Disc(\Q(P_t))|<X$ such that condition $(ii)$ is satisfied. This yields a quantitative version of the result.

\subsubsection*{Comments}

At first glance, the geometric proof seems more technical than the classical one. Let us list some advantages of this technique over the classical one.

A first advantage of geometry is to avoid the use of \emph{ad hoc} ``tricks''. 
More precisely, in the classical method one uses two tricks: the first one is to ensure that $1-y$ is the $m$-th power of some ideal of $k$, which is done by requiring congruence conditions on the variables. In the geometric case, this relies on the Chevalley-Weil theorem and some additional condition, namely: there exists a rational point on $C$ at which the map $C\to\PP^1$ is totally ramified. The second trick, which is the hardest, is to prove that, for every $\ell>1$ dividing $m$, the equation $\alpha^\ell=1-y$ has no solution $\alpha\in k$. In the geometric world, this follows immediately from Hilbert's irreducibility theorem, without any additional technicality.

Another nice feature of the geometric approach: the use of Hilbert's irreducibility theorem automatically gives us a quantitative version of the result. This should be compared to the specific analytic number theory machinery that has been used previously on these quantitative class group problems.

A final advantage comes from the (arguable) fact that it is relatively easier to build {\'e}tale covers of curves than everywhere unramified extensions of number fields.


\subsection{General specialization results}

Following the lines of the geometric proof of the ``toy example'' above, one obtains the following general statement.

\begin{thm}
\label{thold}
Let~$C$ be a smooth, projective, geometrically irreducible curve over $\Q$, let $\Jac(C)$ be the Jacobian of~$C$, and let ${m>1}$ be an integer. Assume that~$C$ admits a finite morphism $C\to\PP^1$ of degree~$d$, totally ramified over some point belonging to $\PP^1(\Q)$.
Then there exist infinitely many (isomorphism classes of) number fields~$k$ with $[k:\Q]=d$  such that
\begin{equation}
\label{eold}
\rank_m \Cl(k) \geq \rank_m \Jac(C)(\Q)_{\tors} - \rank_{\Z} \mathcal{O}_{k}^\times.
\end{equation}
\end{thm}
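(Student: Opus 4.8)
The plan is to mimic exactly the geometric proof of the toy example, replacing the explicit rational function $1-y$ by a general construction coming from the torsion of $\Jac(C)$. First I would fix a subgroup $G \cong (\Z/m\Z)^r$ of $\Jac(C)(\Q)_{\tors}$, where $r = \rank_m \Jac(C)(\Q)_{\tors}$. Each generator of $G$ is represented by a $\Q$-rational divisor class of order $m$; writing such a class as $\divisor(g_i)/m$ after multiplying by $m$ — more precisely, for each $i$ there is $g_i \in \Q(C)^\times$ with $\divisor(g_i) = m D_i$ for a $\Q$-rational divisor $D_i$ representing the $i$-th generator — I obtain functions $g_1,\dots,g_r \in \Q(C)^\times$ whose divisors are $m$-divisible. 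The Kummer extensions $\Q(C)(\sqrt[m]{g_i})$ are then unramified over $C$ (the valuation of each $g_i$ at every place of $C$ is a multiple of $m$), and together they generate an étale cover $\tilde C \to C$ which is geometrically connected of degree $m^r$, precisely because the classes of the $D_i$ generate a group isomorphic to $(\Z/m\Z)^r$ inside $\Jac(C)(\overline{\Q})$.

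Next I would handle the specialization. Let $\phi : C \to \PP^1$ be the given degree-$d$ morphism, totally ramified over a rational point $Q_0 \in \PP^1(\Q)$; let $P_0 \in C(\Q)$ be the point above $Q_0$. Exactly as in the toy example, the Chevalley--Weil theorem controls ramification at primes of good reduction: for any $P \in C(\overline{\Q})$ and any place $v$ of $\Q(P)$ over a good prime, each extension $\Q(P)(\sqrt[m]{g_i(P)})/\Q(P)$ is unramified at $v$, hence $v(g_i(P)) \equiv 0 \pmod m$ by the Kummer criterion. For the finitely many bad primes $p$, I would use Krasner's lemma: choosing $\phi$ to factor through $t \mapsto \Delta^N t$ for suitable $N$ (equivalently, replacing $\phi$ by $\Delta^{-N}\phi$) forces every specialization $P_t := \phi^{-1}(t)$, $t \in \N$, to be $p$-adically close to $P_0$, so the $p$-adic completions of $\Q(P_t)(\sqrt[m]{g_i(P_t)})$ agree with those of the fixed extensions $\Q(\sqrt[m]{g_i(P_0)})$, making $v(g_i(P_t))$ a multiple of $m$ there too. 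Thus all the $P_t$ satisfy the analogue of condition $(i)$ for every $g_i$ simultaneously.

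Then I would apply Hilbert's irreducibility theorem to the composite cover $\tilde C \xrightarrow{f} C \xrightarrow{\phi} \PP^1$ of degree $d m^r$: for infinitely many $t \in \N$ the fibre $f^{-1}(P_t)$ is irreducible over $\Q$, so $[\Q(f^{-1}(P_t)):\Q] = d m^r$, the field $k := \Q(P_t)$ has degree $d$, and $\Q(f^{-1}(P_t)) = k(\sqrt[m]{g_1(P_t)},\dots,\sqrt[m]{g_r(P_t)})$ has degree $m^r$ over $k$. Discarding a thin set of $t$ also guarantees $k$ is linearly disjoint from $\Q(\mu_m)$, so $\rank_m \mathcal{O}_k^\times/(\mathcal{O}_k^\times)^m = \rank_\Z \mathcal{O}_k^\times$. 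Finally I would feed this into the exact sequence
\begin{equation*}
\begin{CD}
1 @>>> \mathcal{O}_{k}^{\times}/\big(\mathcal{O}_{k}^{\times}\big)^m @>>> \Sel^m(k) @>>> \Cl(k)[m] @>>> 0
\end{CD}
\end{equation*}
of the toy example (with $\Sel^m$ defined as in \eqref{Sm}): condition $(i)$ puts each $g_i(P_t)$ into $\Sel^m(k)$, and the degree computation shows their classes span a subgroup $\cong (\Z/m\Z)^r$ there; taking $m$-ranks yields $\rank_m \Cl(k) \geq r - \rank_\Z \mathcal{O}_k^\times$, and that infinitely many non-isomorphic such $k$ arise again follows from Hilbert irreducibility (bounding the degree of $\Q(f^{-1}(P_t))$ over $\Q$).

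The main obstacle is the independence bookkeeping: I must ensure that the classes of $g_1(P_t),\dots,g_r(P_t)$ in $\Sel^m(k)$ remain independent after specialization, not just that each has order $m$. This is exactly what the geometric connectedness and degree $m^r$ of $\tilde C \to C$ buys, combined with Hilbert irreducibility forcing $[\Q(f^{-1}(P_t)):\Q]=dm^r$ — but care is needed because a priori $\Q(P_t)$ could share subfields with some $\Q(\sqrt[m]{g_i(P_t)})$, collapsing the rank; this is handled precisely by the irreducibility of the full fibre $f^{-1}(P_t)$, which controls the whole compositum at once rather than each radical separately. A secondary technical point is checking that $\tilde C$ is geometrically irreducible (so that Hilbert irreducibility applies and the specialized compositum has the expected degree), which reduces to the statement that $G$ injects into $\Jac(C)(\overline\Q)$ — automatic since $G \subseteq \Jac(C)(\Q)_{\tors}$.
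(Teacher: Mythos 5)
Your proposal is correct and follows essentially the same route as the paper, which itself only sketches Theorem~\ref{thold} by running the toy-example argument (Chevalley--Weil at the good primes, Krasner at the bad primes via the totally ramified fibre, Hilbert irreducibility applied to the composite cover $\tilde C\to C\to\PP^1$, and the unit--Selmer--class-group exact sequence) with $r$ functions $g_i$ in place of $1-y$, deferring the full details to \cite{bg18}. The only step deserving slightly more care than you give it is the passage from $[k(\sqrt[m]{g_1(P_t)},\dots,\sqrt[m]{g_r(P_t)}):k]=m^r$ to independence of the $g_i(P_t)$ in $k^\times/(k^\times)^m$ --- this implication can fail absent roots of unity (e.g.\ $[\Q(\sqrt[4]{-1}):\Q]=4$ while $-1$ has order $2$ in $\Q^\times/(\Q^\times)^4$) --- but it is repaired by applying Hilbert irreducibility to the cover base-changed to $\Q(\mu_m)$, so that the whole compositum (not just $k$) is linearly disjoint from $\Q(\mu_m)$; this is a refinement of the disjointness condition you already impose.
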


Inspired by the technique introduced in \cite{levin07}, this theorem was proved in~\cite{gl12} in the case when $C$ is a superelliptic curve defined by a ``nice equation'' (see Corollary~3.1 of \cite{gl12}). The version above is proved in \cite{bg18}, the base field being $\Q$ for simplicity. In \S{}\ref{Aaron} we state a variant of this result, in which the assumption that the morphism $C\to\PP^1$ is totally ramified over some rational point is replaced by a more technical one.

While  Theorem~\ref{thold} is quite general, its applicability in concrete cases is impaired by the presence of the negative term 
${- \rank_{\Z} \mathcal{O}_{k}^\times}$ on the right: the rank of the unit group of the field $k$ tends to be large, especially if $d>2$.

This deficiency is avoided in the following theorem, which constitutes the main result of \cite{bg18}.
Let us denote by $\rank_{\mu_m}\Jac(C)$ the maximal integer~$r$ such that $\Jac(C)$ has a $\Gal(\overline{\Q}/\Q)$-submodule isomorphic to $\mu_m^r$.

\begin{thm}
\label{thnew}
In the set-up of Theorem~\ref{thold},
there exist infinitely many number fields~$k$ with ${[k:\Q]=d}$ such that
\begin{equation}
\label{enew}
\rank_m \Cl(k) \geq \rank_{\mu_m} \Jac(C).
\end{equation}
\end{thm}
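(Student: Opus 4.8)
The plan is to mimic the geometric proof of the toy example, but to replace the rational point class $T-\infty \in \Jac(C)(\Q)$ by an entire $\mu_m$-submodule, and correspondingly to replace the single Kummer element $1-y$ by a suitable tuple of Kummer elements obtained by descent. Concretely, write $r = \rank_{\mu_m}\Jac(C)$ and fix a $\Gal(\overline{\Q}/\Q)$-equivariant embedding $\mu_m^r \hookrightarrow \Jac(C)$. Applying the Kummer sequence $1 \to \mu_m \to \Jac(C) \xrightarrow{m} \Jac(C) \to 1$ and taking Galois cohomology over the function field $\Q(C)$, the image of $\mu_m^r \subset \Jac(C)(\overline{\Q}) \subset \Jac(C)(\overline{\Q(C)})$ under the connecting map produces classes in $H^1(\Q(C), \mu_m)^r = (\Q(C)^\times/(\Q(C)^\times)^m)^r$; equivalently (this is really the content of "$\mu_m \subset \Jac$ gives a rational point of order $m$ after a Kummer twist", exactly as $1-y$ arose above) one obtains elements $\gamma_1, \dots, \gamma_r \in \Q(C)^\times$ such that adjoining their $m$-th roots yields an étale $(\Z/m\Z)^r$-cover $\tilde C \to C$ which is geometrically connected. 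The key point is that, because the submodule is $\mu_m^r$ (and not merely $(\Z/m\Z)^r$), the $\gamma_i$ can be chosen to live in $\Q(C)^\times$ itself rather than over a cyclotomic extension, and moreover the divisor of each $\gamma_i$ is $m$ times a principal divisor, so the Kummer extension $\Q(C)(\sqrt[m]{\gamma_1},\dots,\sqrt[m]{\gamma_r})/\Q(C)$ is unramified at every place of $C$.

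Next I would specialize. Let $\phi : C \to \PP^1$ be the given degree-$d$ map, totally ramified over a rational point $P_0 \in C(\Q)$ lying over $\infty \in \PP^1(\Q)$ (or over whichever rational point of $\PP^1$ is the totally ramified value; rename coordinates so it is $\infty$). By the Chevalley--Weil theorem, for all but finitely many primes $p$ and all $P \in C(\overline{\Q})$, the extension $\Q(P)(\sqrt[m]{\gamma_i(P)})/\Q(P)$ is unramified at places above $p$, so $v(\gamma_i(P)) \equiv 0 \pmod m$ there. For the finitely many bad primes $p$, one uses Krasner's lemma exactly as in the toy example: choosing $\phi$ appropriately (precompose with multiplication by a large power of $\Delta = \prod_{p \text{ bad}} p$), every $P_t := \phi^{-1}(t)$ with $t \in \N$ is $p$-adically so close to $P_0$ that the localizations of $\Q(P_t)(\sqrt[m]{\gamma_i(P_t)})$ at primes above $p$ agree with those of the specialization at $P_0$; and since $\phi$ is totally ramified at $P_0$, a local computation shows $\gamma_i(P_0)$ is an $m$-th power locally at each bad $p$ (this is the analogue of "$1-y(P_0)$ is the cyclotomic element", and it is where total ramification over a rational point is used — it forces the local uniformizer structure that makes the valuations divisible by $m$). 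Hence all $P_t$, $t \in \N$, satisfy the analogue of condition $(i)$ simultaneously for every $\gamma_i$.

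Then apply Hilbert irreducibility to the composite cover $\tilde C \to C \xrightarrow{\phi} \PP^1$ of degree $dm^r$: for infinitely many $t \in \N$ the fiber $f^{-1}(P_t)$ is irreducible of degree $dm^r$ over $\Q$, so $k := \Q(P_t)$ has degree $d$ and $k(\sqrt[m]{\gamma_1(P_t)},\dots,\sqrt[m]{\gamma_r(P_t)})/k$ has degree $m^r$; this is the analogue of $(ii)$ and it guarantees that the classes of $\gamma_1(P_t),\dots,\gamma_r(P_t)$ span a subgroup isomorphic to $(\Z/m\Z)^r$ in $k^\times/(k^\times)^m$. Now invoke the "Selmer" exact sequence
\begin{equation*}
1 \longrightarrow \mathcal{O}_k^\times/(\mathcal{O}_k^\times)^m \longrightarrow \Sel^m(k) \longrightarrow \Cl(k)[m] \longrightarrow 0
\end{equation*}
as in the toy example. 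By condition $(i)$ the subgroup $V_t := \langle \gamma_1(P_t),\dots,\gamma_r(P_t)\rangle \cong (\Z/m\Z)^r$ lies in $\Sel^m(k)$. The crucial improvement over Theorem~\ref{thold} is that here we do not pay the penalty $\rank_\Z \mathcal{O}_k^\times$: because the Kummer classes come from the $\mu_m$-submodule, the cover $\tilde C \to C$ is already defined over $\Q$ \emph{and} has the property that $\Q(\mu_m)$-disjointness is automatic, so one can arrange (again by Hilbert irreducibility applied to a slightly larger cover, or by the same irreducibility statement) that $V_t$ maps injectively into $\Cl(k)[m]$ — i.e. $V_t \cap \mathcal{O}_k^\times/(\mathcal{O}_k^\times)^m = 0$. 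This is the step I expect to be the main obstacle: showing that the specialized Kummer classes remain independent \emph{modulo units}, not just modulo $m$-th powers. The resolution in \cite{bg18} is essentially that the $\mu_m$-structure means each $\gamma_i$ is, up to $m$-th powers and up to the unit ambiguity, pinned down by its divisor being $m$ times an \emph{effective}-minus-\emph{effective} combination supported away from the unit locus; combined with a pigeonhole/linear-algebra argument over $\mathbb{F}_p$ ($p \mid m$) bounding how much of $V_t$ can be absorbed by units, one gets that the image in $\Cl(k)[m]$ has rank $\geq r$. Feeding the image of $V_t$ into the exact sequence then yields $\rank_m \Cl(k) \geq r = \rank_{\mu_m}\Jac(C)$, and the quantitative form of Hilbert irreducibility (Dvornicich--Zannier) shows there are infinitely many such $k$, indeed infinitely many up to isomorphism since their discriminants grow. $\qed$
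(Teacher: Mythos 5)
Your proposal follows the Kummer-theoretic template of the toy example, but that is the proof of Theorem~\ref{thold}, not of Theorem~\ref{thnew}; the paper states explicitly that the latter ``relies on Class Field Theory'', and the step you yourself flag as the main obstacle is exactly where the Kummer route breaks down. There are two concrete problems. First, your dictionary is backwards. Unramified $\mu_m$-torsors over $C$, i.e.\ extensions $\Q(C)(\sqrt[m]{\gamma})$ with $\divisor(\gamma)=mD$ and $\gamma\in\Q(C)^\times$, correspond via $H^1_{\mathrm{et}}(C_{\overline{\Q}},\mu_m)\cong\Jac(C)[m]$ to \emph{rational $m$-torsion classes} in $\Jac(C)(\Q)$ --- that is the hypothesis of Theorem~\ref{thold}, as with $1-y$ in the toy example. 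A Galois-equivariant embedding $\mu_m^r\hookrightarrow\Jac(C)$ instead gives a Galois-invariant element of $H^1_{\mathrm{et}}(C_{\overline{\Q}},\Z/m\Z)\cong\operatorname{Hom}(\mu_m,\Jac(C)[m])$ (Weil pairing), i.e.\ an everywhere unramified, geometrically connected $(\Z/m\Z)^r$-Galois cover $\tilde{C}\to C$ defined over $\Q$ \emph{with its Galois action defined over $\Q$}. It does not give Kummer generators $\gamma_i\in\Q(C)^\times$; those exist only over $\Q(\mu_m)(C)$, with a twisted equivariance. Your construction of the $\gamma_i$ via the connecting map also does not parse: $\mu_m^r(\overline{\Q})$ is not contained in $\Jac(C)(\Q(C))$ in general, and the connecting map of the Kummer sequence lands in $H^1(\Q(C),\Jac(C)[m])$, not in $H^1(\Q(C),\mu_m)$.

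Second, even granting Kummer classes in $\Sel^m(k)$, the claim that Hilbert irreducibility lets you arrange $V_t\cap\mathcal{O}_k^\times/(\mathcal{O}_k^\times)^m=0$ is unsupported: HIT controls the degree of $k(\sqrt[m]{\gamma_i(P_t)})/k$, hence the order of $\gamma_i(P_t)$ in $k^\times/(k^\times)^m$, but says nothing about whether $\gamma_i(P_t)$ is a unit times an $m$-th power. The paper makes precisely this point in the toy example (``we don't have a way to ensure that $1-y(P)$ is not a unit modulo $m$-th powers''), and the pigeonhole-over-units resolution you attribute to \cite{bg18} is not what happens there. The actual argument sidesteps units entirely: specialize the $(\Z/m\Z)^r$-cover at $P_t$; by HIT the fibre is a field $L$ with $\Gal(L/\Q(P_t))\cong(\Z/m\Z)^r$, by Chevalley--Weil together with the Krasner argument at bad primes (using the totally ramified rational point) $L/\Q(P_t)$ is unramified at every finite place, hence (up to the usual care at the infinite places) $L$ lies in the Hilbert class field and class field theory gives a surjection $\Cl(\Q(P_t))\twoheadrightarrow(\Z/m\Z)^r$, whence $\rank_m\Cl(\Q(P_t))\geq r$ with no unit term. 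This is the generalization of Mestre's construction that the paper alludes to. The portions of your proposal concerning Chevalley--Weil, Krasner at bad primes, and quantitative HIT are sound and do carry over to that argument.
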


In contradistinction with the proof of Theorem~\ref{thold} which is based on Kummer theory, the proof of Theorem~\ref{thnew} relies on Class Field Theory. It can be seen as a generalization of the constructions of Mestre~\cite{Mes1, Mes3, Mes2, Mes92}.

In both Theorems~\ref{thold} and~\ref{thnew} the ``infinitely many'' can be made quantitative, the fields being ordered by discriminant.

\begin{thm}
\label{thquant}
Let ${\phi\in \Q(C)}$ be the rational function defining the morphism ${C\to \PP^1}$ appearing in both Theorems~\ref{thold} and~\ref{thnew}. 
Assume that there exists a rational function ${x\in \Q(C)}$ of degree~$n$ such that ${\Q(C)=\Q(\phi,x)}$. Then, for sufficiently large positive~$X$,
in both these theorems  the number of isomorphism classes of the fields~$k$ satisfying~\eqref{eold} or~\eqref{enew}, respectively, and such that ${|\Disc(k)| \le X}$ is $\gg X^{1/2n(d-1)}/\log X$.
\end{thm}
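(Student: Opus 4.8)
The plan is to follow the strategy of the geometric proof of the toy example, keeping careful track of the heights involved so as to extract a counting statement from the quantitative form of Hilbert irreducibility. First I would set up the tower of covers: composing the finite morphism $\phi\colon C\to\PP^1$ of degree $d$ with whatever étale (or Class Field Theory) cover $f\colon\tilde C\to C$ is produced in the proof of Theorem~\ref{thold} (respectively Theorem~\ref{thnew}), one obtains a cover $\tilde C\to\PP^1$ of degree $md'$ for the relevant $d'$. The specialization fields $k=\Q(P_t)$ arise as $t$ ranges over integers (or rationals of bounded height) with $P_t=\phi^{-1}(t)$, subject to the Hilbert-irreducibility condition that the fibre over $t$ in the top cover is irreducible of full degree; it is exactly for those $t$ that inequality~\eqref{eold} or~\eqref{enew} holds.

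Next I would invoke the effective version of Hilbert irreducibility of Dvornicich--Zannier \cite{Zan}, which counts integers $t$ up to a bound $B$ lying outside the ``thin'' exceptional set, giving $\gg B - \sqrt B$ good values, i.e. $\gg B$ for $B$ large. The point of introducing the auxiliary function $x\in\Q(C)$ with $\Q(C)=\Q(\phi,x)$ and $\deg x=n$ is to bound $|\Disc(k)|$ in terms of $B$: the field $k=\Q(P_t)$ is generated by $x(P_t)$, which is a root of a polynomial of degree $\le n$ in $t$ whose coefficients are polynomials in $t$ of controlled degree, so $|\Disc(k)|$ is polynomially bounded in $B$, of size roughly $B^{2n(d-1)}$ (the $d-1$ accounting for the degree of $\phi^{-1}(t)$ as a point of $C$ other than the totally ramified place, and the factor $2n$ coming from the discriminant of the minimal polynomial of a degree-$n$ generator, which is quadratic in its coefficients). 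Inverting this bound, taking $B\asymp X^{1/2n(d-1)}$, yields $\gg X^{1/2n(d-1)}$ admissible $t$; finally one must pass from values $t$ to isomorphism classes of fields $k$, and since a given number field of bounded discriminant can be hit by only $\ll \log X$ values of $t$ (a standard consequence of bounding the number of points of bounded height on $C$ mapping to it, or of the fact that the heights of the $t$ grow and each $k$ has boundedly many embeddings), one divides by $\log X$ to reach the stated count $\gg X^{1/2n(d-1)}/\log X$.

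The main obstacle, and the step requiring the most care, is the explicit discriminant estimate: relating the discriminant of the specialized field $k=\Q(P_t)$ to the height of $t$ with the precise exponent $2n(d-1)$. One has to choose the generator of $k$ wisely (clearing denominators of $x(P_t)$, controlling the primes that divide the index $[\mathcal O_k:\Z[x(P_t)]]$), and then bound the discriminant of its minimal polynomial, which is a resultant-type expression whose degree in $t$ one computes from the bidegree of the plane model of $C$ cut out by $\phi$ and $x$. A secondary technical point is ensuring that the Dvornicich--Zannier estimate, which is usually stated for specializations of a single cover of $\PP^1$, applies to the composite cover $\tilde C\to\PP^1$ and that the exceptional thin set does not swallow the values $t$ needed to enforce the local conditions (the congruence conditions coming from Krasner/Chevalley--Weil in the proof of Theorem~\ref{thold}); but these local conditions cut out a positive-density subset of integers, so they are compatible with the $\gg B$ lower bound. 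Once the discriminant bound is in hand the rest is bookkeeping, and the same argument handles both \eqref{eold} and \eqref{enew} since the only change is the degree of the cover $f$, which affects constants but not the exponent.
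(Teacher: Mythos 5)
Your overall strategy---quantitative Hilbert irreducibility \`a la Dvornicich--Zannier \cite{Zan} applied to the composite cover $\tilde C\to C\to\PP^1$, a polynomial bound on $|\Disc(\Q(P_t))|$ in terms of $t$, and a logarithmic loss in passing from parameters $t$ to isomorphism classes of fields---is exactly the route the paper takes (the details are deferred to \cite{bg18}, and the toy example sketches the same mechanism). However, your derivation of the exponent $2n(d-1)$, which is the entire content of the theorem, does not hold up. You attribute the factor $d-1$ to ``the degree of $\phi^{-1}(t)$ as a point of $C$ other than the totally ramified place'' (but $\phi^{-1}(t)$ has degree $d$, and the totally ramified point lies over a different point of $\PP^1$, so nothing is being subtracted), and the factor $2n$ to ``the discriminant of the minimal polynomial of a degree-$n$ generator, which is quadratic in its coefficients'' (but the generator $x(P_t)$ of $k=\Q(P_t)$ has degree $d$ over $\Q$, not $n$, and the discriminant of a degree-$d$ polynomial is homogeneous of degree $2d-2$ in its coefficients, not quadratic). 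The correct bookkeeping is: since $\Q(C)=\Q(\phi,x)$ with $\deg\phi=d$ and $\deg x=n$, the pair $(\phi,x)$ realizes $C$ birationally as a plane curve $F(T,X)=0$ with $\deg_X F=d$ and $\deg_T F=n$; the element $x(P_t)$ is a root of $F(t,X)$, whose $X$-discriminant is a polynomial in $t$ of degree at most $(2d-2)\cdot n=2n(d-1)$; and $\Disc(k)$ divides $\Disc_X F(t,X)$ up to the square of an index, whence $|\Disc(k)|\ll t^{2n(d-1)}$ and the stated count after taking $B\asymp X^{1/(2n(d-1))}$.

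A secondary point: the $\log$ saving should be quoted directly from Dvornicich--Zannier, whose theorem already counts the number of \emph{distinct} fields $\Q(P_t)$ for $t\le B$ as $\gg B/\log B$; the lemma you invoke informally (that a fixed field arises from at most $O(\log B)$ values of $t$) is precisely the nontrivial part of their argument and is not an immediate consequence of counting points of bounded height. Your handling of the congruence/local conditions and your observation that both cases \eqref{eold} and \eqref{enew} are treated identically are fine.
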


In a recent work \cite{BL16}, Bilu and Luca improved the quantitative version of Hilbert's irreducibility theorem given by Dvornicich and Zannier, on which our quantitative results are based. We underline the fact that any improvement of quantitative HIT automatically yields a similar improvement of our quantitative results.

In the case when $C$ is a hyperelliptic curve with a rational Weierstrass point, it is possible to improve slightly the quantitative result. More precisely, we obtain in \cite{gl12} the following quantitative version of Theorem~\ref{thold} for such curves.

\begin{cor}
\label{HypCor}
Let $C$ be a smooth projective hyperelliptic curve over $\Q$ with a rational Weierstrass point, and let $m>1$ be an integer.  Let $g$ denote the genus of $C$.  Then there exist $\gg X^{\frac{1}{2g+1}}/\log X$ imaginary (resp. real) quadratic number fields $k$ with ${|\Disc(k)|<X}$ and
\begin{align*}
& \rank_m \Cl(k)\geq \rank_m \Jac(C)(\Q)_{\tors} \\
\text{(resp. } &\rank_m \Cl(k)\geq \rank_m \Jac(C)(\Q)_{\tors}-1\text{).}
\end{align*}
\end{cor}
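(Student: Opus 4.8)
\medskip

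The plan is to follow the argument of the ``toy example'' above with $C$ in place of the curve $y^2=1-x^m$, while keeping track of all sizes involved. Since a hyperelliptic curve with a rational Weierstrass point can be put in the form $y^2=f(x)$ with $f\in\Q[x]$ separable of degree $2g+1$ (move the rational Weierstrass point to infinity), and since replacing $x$ by $-x$ flips the sign of the leading coefficient (as $2g+1$ is odd), we may assume that $f$ has positive leading coefficient. The hyperelliptic map $x\colon C\to\PP^1$ then has degree $d=2$ and is totally ramified over the rational point $\infty$, so Theorem~\ref{thold} applies with $d=2$; the task is to make the ``infinitely many'' quantitative. If $g=0$ then $\Jac(C)=0$ and there is nothing to prove, so assume $g\ge 1$, whence $\deg f=2g+1\ge 3$.

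First I would reproduce the construction underlying Theorem~\ref{thold}. Put $r=\rank_m\Jac(C)(\Q)_{\tors}$ and, as in the toy example, realize a subgroup $\Lambda\subseteq\Jac(C)(\Q)$ isomorphic to $(\Z/m\Z)^r$ by adjoining $m$-th roots of functions $f_1,\dots,f_r\in\Q(C)$ whose divisors are $m$ times divisors representing a basis of $\Lambda$; this produces a connected étale cover $\pi\colon\tilde{C}\to C$ over $\Q$, Galois with group $\cong(\Z/m\Z)^r$ and of degree $m^r$. Exactly as in the proof of Theorem~\ref{thold}, I would replace $x$ by a function $\phi=(x-a)/b$ with $a,b\in\Z$, chosen by a Chevalley--Weil argument at the primes of good reduction and a Krasner's-lemma argument at the finitely many bad ones, so that every specialization $P_t:=\phi^{-1}(t)$ with $t\in\Z$ satisfies condition~$(i)$ of the toy example for each of $f_1,\dots,f_r$. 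Then I would apply the quantitative version of Hilbert's irreducibility theorem of Dvornicich and Zannier to the composite cover $\tilde{C}\xrightarrow{\pi}C\xrightarrow{\phi}\PP^1$ of degree $2m^r$. For a fixed sign, this yields $\gg T/\log T$ integers $t$ of that sign with $|t|\le T$ for which $[\Q(\pi^{-1}(P_t)):\Q]=2m^r$ and such that the fields $\Q(P_t)=\Q(\sqrt{f(t)})$ are pairwise non-isomorphic. For such $t$: $\Q(P_t)$ is quadratic, $\Q(\pi^{-1}(P_t))/\Q(P_t)$ has degree $m^r$ so the classes of $f_1(P_t),\dots,f_r(P_t)$ are independent in $\Q(P_t)^\times/(\Q(P_t)^\times)^m$ and, by condition~$(i)$, lie in $\Sel^m(\Q(P_t))$; taking $t\to-\infty$ makes $\Q(P_t)$ imaginary quadratic and $t\to+\infty$ makes it real quadratic; and condition~$(iii)$ fails for at most finitely many $t$, namely those for which $f(t)$ lies in one of the finitely many square classes cut out by quadratic subfields of $\Q(\mu_m)$. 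The Selmer exact sequence of the toy example then gives $\rank_m\Cl(\Q(P_t))\ge r-\rank_\Z\mathcal{O}_{\Q(P_t)}^\times$, which is $\ge r$ in the imaginary case and $\ge r-1$ in the real case.

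It remains to bound discriminants. Since $\Q(P_t)=\Q(\sqrt{f(t)})$, its discriminant divides $4f(t)$, so $|\Disc(\Q(P_t))|\le 4|f(t)|\ll T^{2g+1}$ whenever $|t|\le T$, the implied constant depending only on $C$. Choosing $T=(cX)^{1/(2g+1)}$ with $c>0$ small enough makes $|\Disc(\Q(P_t))|<X$ for all such $t$, leaving $\gg T/\log T\gg X^{1/(2g+1)}/\log X$ pairwise non-isomorphic imaginary (resp.\ real) quadratic fields $k$ with $|\Disc(k)|<X$ and the stated bound on $\rank_m\Cl(k)$. This is where the hyperelliptic hypothesis earns the factor of $2$ in the exponent over Theorem~\ref{thquant}: for a double cover the discriminant of the specialized field is bounded \emph{directly} by $|f(t)|$, of size $T^{\deg f}=T^{2g+1}$, rather than by the coarser height estimate behind the general bound, which for $d=2$ only gives $X\asymp T^{2(2g+1)}$.

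I expect the main obstacle to be the counting step: verifying that the quantitative Hilbert irreducibility theorem, applied to the degree-$2m^r$ cover, produces not just many admissible $t$ but $\gg T/\log T$ \emph{pairwise non-isomorphic} fields $\Q(\sqrt{f(t)})$ with $|t|\le T$. The de-duplication here rests on bounding the number of $t\le T$ with $f(t)$ in a fixed square class, i.e.\ on integral points of the twisted curves $du^2=f(x)$ of genus $g\ge 1$ --- precisely the kind of input later sharpened by Bilu and Luca. A secondary technical point, imported from the proof of Theorem~\ref{thold}, is the uniform control at the bad primes: one must find a single substitution $\phi=(x-a)/b$ making condition~$(i)$ hold simultaneously for all of $f_1,\dots,f_r$ at every prime dividing $2m\Disc(f)$, which generalizes the single function $1-y$ and the single totally ramified point $(1,0)$ used in the toy example.
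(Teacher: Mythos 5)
Your proposal is correct and follows essentially the same route as the paper (which defers the details to \cite{gl12} but lays out exactly this strategy in the toy example): specialize the \'etale $(\Z/m\Z)^r$-cover along the degree-$2$ map given by the odd-degree model $y^2=f(x)$, apply the Dvornicich--Zannier quantitative Hilbert irreducibility theorem, and gain the exponent $1/(2g+1)$ over the generic bound of Theorem~\ref{thquant} by bounding $|\Disc(\Q(\sqrt{f(t)}))|$ directly by $4|f(t)|\ll T^{2g+1}$. Your identification of the two delicate points --- diversity of the fields $\Q(\sqrt{f(t)})$ (which is built into the Dvornicich--Zannier statement) and the uniform choice of $\phi=(x-a)/b$ handling all bad primes and all functions $f_1,\dots,f_r$ simultaneously --- matches where the actual work lies in \cite{gl12}.
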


In view of the statement above, the following question arises immediately:

\begin{question}
\label{Q1}
Let $m>1$ be an integer.  Do there exist hyperelliptic curves $C$ over $\Q$ with $\rank_m \Jac(C)(\Q)_{\tors}$ arbitrarily large?
\end{question}

According to Corollary~\ref{HypCor}, a positive answer to this question would provide a proof of Conjecture~\ref{conj1} in the $d=2$ case, provided the curves have rational Weierstrass points.

For $m=2$, the question above has a positive answer. Apart from this easy case, very little is known. To our knowledge, the best general result is the following: given $m>1$, there exist hyperelliptic curves $C$ over $\Q$ with $\rank_m \Jac(C)(\Q)_{\tors}\geq 2$. This allows one to derive Yamamoto's result from Corollary~\ref{HypCor} (see \S{}\ref{Yamamoto}).

Finally, it follows from Theorem~\ref{thnew} that Corollary~\ref{HypCor} and Question~\ref{Q1} have natural analogues in which $\rank_m \Jac(C)(\Q)_{\tors}$ is replaced by $\rank_{\mu_m} \Jac(C)$. Unfortunately, we have not been able to find examples of hyperelliptic curves over $\Q$ with large $\rank_{\mu_m} \Jac(C)$.


\subsection{Record of known results towards Conjecture~\ref{conj1}}
\label{tablesofresults}

In this section we give a brief summary of the history of results on the problem of finding infinite families of number fields of degree $d$ over $\mathbb{Q}$ with ideal class groups of large $m$-rank (see Tables 1 and 2 below for a more comprehensive list of results).  The earliest such result could be considered to be Gauss' result determining, in modern terms, the $2$-rank of the class group of a quadratic number field in terms of the primes dividing the discriminant of the quadratic field.  In particular, it follows from Gauss' result that the $2$-rank of the ideal class group of a quadratic number field can be made arbitrarily large.  In contrast to Gauss' result, there is not a single quadratic number field $k$ and prime $p\neq 2$ for which it is known that $\rank_p \Cl(k)>6$, although the Cohen-Lenstra heuristics \cite{Len, Len2} predict that for any given positive integer $r$, a positive proportion of quadratic fields $k$ should have $\rank_p \Cl(k)=r$.

\begin{table}[ht]
\label{table1}
\caption{Values of $m$ and $r$ for which it is known that there exist infinitely many quadratic fields $k$ with $\rank_m\Cl(k)\geq r$ (we let $r=\infty$ if $\rank_m\Cl(k)$ can be made arbitrarily large).
All results in this table can be recovered by applying Corollary~\ref{HypCor}, except Mestre's ones, which are applications of Theorem~\ref{thnew}.}
\bigskip
\centering
\begin{tabular}{|l|c|c|c|c|}
\hline
Author(s) & Year & Type & $m$ & $r$\\
\hline
Gauss & 19th c. & imaginary, real & $2$ & $\infty$\\
\hline
Nagell \cite{Nag, Nag2}& 1922 & imaginary & $>1$ & $1$ \\
\hline
Yamamoto \cite{Yam} & 1970 & imaginary & $>1$& $2$\\
\hline
Yamamoto \cite{Yam}, Weinberger \cite{Wei} &1970, 1973 & real & $>1$& $1$\\
\hline
Craig \cite{Cra73} & 1973 & imaginary & $3$& $3$\\
&  & real & $3$& $2$\\
\hline
Craig \cite{Cra77} & 1977 & imaginary & $3$& $4$\\
 & & real & $3$& $3$\\
\hline
Diaz y Diaz \cite{Dia} & 1978 & real & $3$& $4$\\
\hline
Mestre \cite{Mes1, Mes3, Mes2} & 1980 & imaginary, real & $5,7$& $2$\\
\hline
Mestre \cite{Mes92} & 1992 & imaginary, real & $5$& $3$\\
\hline
\end{tabular}
\bigskip

\end{table}

\begin{table}[ht]
\label{table2}
\caption{Values of $m$, $d$, and $r$ for which it is known that there exist infinitely many number fields $k$ of degree $d$ with $\rank_m\Cl(k)\geq r$.
All results in this table can be recovered by applying variants of Theorem~\ref{thold}, except the cases when $m=2$, which follow from variants of Theorem~\ref{thnew}.}
\bigskip
\centering
\begin{tabular}{|l|c|c|c|c|}
\hline
Author(s) & Year & $m$ & $d$ & $r$\\
\hline
Brumer, Rosen \cite{Bru, Ros} & 1965 & $>1$ & $d=m$ & $\infty$\\
\hline
Uchida \cite{Uch} & 1974 & $>1$ & $3$ & $1$\\
\hline
Ishida \cite{Ish} & 1975 & $2$ & prime & $d-1$\\
\hline
Azuhata, Ichimura \cite{Ich} & 1984 & $>1$ & $>1$ & $\left\lfloor \frac{d}{2}\right\rfloor$\\
\hline
Nakano \cite{Nak5, Nak2} & 1984 & $>1$ & $>1$ & $\left\lfloor \frac{d}{2}\right\rfloor+1$\\
 & 1985 & $2$ & $>1$ & $d$\\
\hline
Nakano \cite{Nak4} & 1988 & $2$ & $3$ & $6$\\
\hline
Levin \cite{levin07} & 2007 & $>1$ & $>1$ & $\left\lceil \left\lfloor\frac{d+1}{2}\right\rfloor+\frac{d}{m-1}-m\right\rceil$\\
\hline
Kulkarni \cite{Kul} & 2017 & $2$ & $3$ & $8$\\
\hline
\end{tabular}
\end{table}

The first constructive result on $m$-ranks of class groups for arbitrary $m$ was given in 1922 by Nagell \cite{Nag, Nag2}, who proved that for any positive integer $m$, there exist infinitely many imaginary quadratic number fields whose class group has an element of order $m$ (in particular, there are infinitely many imaginary quadratic fields with class number divisible by $m$).  Nagell's result has since been reproved by a number of different authors (e.g., \cite{Cho}, \cite{Hum}, \cite{Kur}).  Nearly fifty years later, working independently, Yamamoto \cite{Yam} and Weinberger \cite{Wei} extended Nagell's result to real quadratic fields.  Soon after, Uchida \cite{Uch} proved the analogous result for cubic cyclic fields.  In 1984, Azuhata and Ichimura \cite{Ich} succeeded in extending Nagell's result to number fields of arbitrary degree.  In fact, they proved that for any integers $m,d>1$ and any nonnegative integers $r_1$, $r_2$, with $r_1+2r_2=d$, there exist infinitely many number fields $k$ of degree $d=[k:\mathbb{Q}]$ with $r_1$ real places and $r_2$ complex places such that
\begin{equation}
\label{Ich}
\rank_m \Cl(k)\geq r_2.
\end{equation}
The right-hand side of (\ref{Ich}) was subsequently improved to $r_2+1$ by Nakano \cite{Nak5, Nak2}.  Choosing $r_2$ as large as possible, we thus obtain, for any $m$, infinitely many number fields $k$ of degree $d>1$ with
\begin{equation}
\label{eqN1}
\rank_m \Cl(k)\geq \left\lfloor\frac{d}{2}\right\rfloor+1,
\end{equation}
where $\lfloor \cdot \rfloor$ and $\lceil \cdot \rceil$ denote the greatest and least integer functions, respectively.  For general $m$ and $d$, \eqref{eqN1} is the best result that is known on producing number fields of degree $d$ with a class group of large $m$-rank.  In \cite{levin07} it was shown that there exist infinitely many number fields $k$ of degree $d$ satisfying $\rank_m \Cl(k)\geq \left\lceil \left\lfloor\frac{d+1}{2}\right\rfloor+\frac{d}{m-1}-m\right\rceil$, improving \eqref{eqN1} when $d\geq m^2$.

For certain special values of $m$ and $d$, slightly more is known.  Of particular note to us are Mestre's papers \cite{Mes1,Mes3,Mes2,Mes92} giving the best known results for $m=5,7$ and $d=2$. Mestre's method can be seen as an application of Theorem~\ref{thnew} (see \S{}\ref{Mestre} below).

Recently, progress has been made on obtaining quantitative results on counting the number fields in the above results.  Murty \cite{Mur2} gave the first results in this direction, obtaining quantitative versions of the theorems of Nagell and Yamamoto-Weinberger.  His results have since been improved by, among others, Soundararajan \cite{Sou} in the imaginary quadratic case and Yu \cite{Yu} in the real quadratic case.  In higher degrees, Hern{\'a}ndez and Luca \cite{Luca} gave the first such result for cubic number fields, while Bilu and Luca \cite{Bilu} succeeded in proving a quantitative theorem for number fields of arbitrary degree.  Bilu and Luca's result was improved in \cite{levin07}, where a quantitative version of Azuhata and Ichimura's result was given. In \S{}\ref{Aaron}, we show how it is possible to derive from Theorem~\ref{thold} a short proof of this result.


\section{The examples}


This section is devoted to examples of applications of Theorems \ref{thold} and \ref{thnew}. Each of these examples is obtained by revisiting previous constructions. In certain cases, this yields new quantitative results.


\subsection{Yamamoto's result}
\label{Yamamoto}

In \cite{Yam}, Yamamoto proved that, for any integer $m>1$, there exist infinitely many imaginary (resp. real) quadratic fields $k$ with $\rank_m\Cl(k)\geq 2$ (resp. $\rank_m\Cl(k)\geq 1$).

In order to recover this result via geometry, we proved the following in \cite{gl12}.

\begin{lem}
\label{hlem}
Let $\lambda\in \Q^{\times}$, $\lambda\neq \pm 1$, and let $m>1$ be an integer.  Let $C$ be the smooth projective hyperelliptic curve defined over $\Q$ by the affine equation $y^2=x^{2m}-(1+\lambda^2)x^m+\lambda^2$. Then $C$ has a rational Weierstrass point, and $\rank_m \Jac(C)(\Q)_{\tors}\geq 2$.
\end{lem}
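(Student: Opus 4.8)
The plan is to put the defining polynomial into factored form, read off a rational Weierstrass point, exhibit two explicit $m$-torsion classes in $\Jac(C)(\Q)$ arising from the ramification of a natural degree-$m$ map, and finally check that these two classes span a copy of $(\Z/m\Z)^2$.

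First I would note that $x^{2m}-(1+\lambda^2)x^m+\lambda^2=(x^m-1)(x^m-\lambda^2)$, so the affine model is $y^2=(x^m-1)(x^m-\lambda^2)$ (we assume $\lambda^2\neq 1$, as otherwise the right-hand side is a perfect square and $C$ is not geometrically irreducible). Then the right-hand side is separable of even degree $2m$, so $C$ has genus $m-1$, has two $\Q$-rational points $\infty_\pm$ at infinity (distinguished by $y/x^m\to\pm1$), and its Weierstrass points are exactly the $2m$ roots of $(x^m-1)(x^m-\lambda^2)$. Since $x=1$ is such a root, $(1,0)$ is a $\Q$-rational Weierstrass point, which settles the first assertion.

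Next I would introduce $t:=y/(x^m-1)\in\Q(C)$. A direct computation gives $x^m=\frac{(t-\lambda)(t+\lambda)}{(t-1)(t+1)}$ and $\Q(C)=\Q(t)(x)$, so $C$ is the cyclic degree-$m$ cover of $\PP^1_t$ obtained by adjoining an $m$-th root of $R(t):=\frac{(t-\lambda)(t+\lambda)}{(t-1)(t+1)}$; since $R$ has simple zeros at $t=\pm\lambda$, simple poles at $t=\pm1$, and $R(\infty)=1$, this cover is totally ramified over $t\in\{\lambda,-\lambda,1,-1\}$ and unramified over $t=\infty$. Writing $A_\lambda,A_{-\lambda},A_1,A_{-1}\in C(\Q)$ for the (unique in their fibre, hence $\Q$-rational) points over these four branch values — concretely $A_\lambda=(0,-\lambda)$, $A_{-\lambda}=(0,\lambda)$, $A_1=\infty_+$, $A_{-1}=\infty_-$ — the identities $\divisor(\tfrac{t-\lambda}{t-1})=mA_\lambda-mA_1$ and $\divisor(\tfrac{t+\lambda}{t-1})=mA_{-\lambda}-mA_1$ show that $[A_\lambda-A_1]$ and $[A_{-\lambda}-A_1]$ lie in $\Jac(C)(\Q)[m]$, while $\divisor(x)=A_\lambda+A_{-\lambda}-A_1-A_{-1}$ yields the relation $[A_{-1}-A_1]=[A_\lambda-A_1]+[A_{-\lambda}-A_1]$.

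The hard part will be to show that $[A_\lambda-A_1]$ and $[A_{-\lambda}-A_1]$ are $\Z/m\Z$-independent, i.e. generate $(\Z/m\Z)^2$. I would argue via the cyclic cover structure: suppose $a(A_\lambda-A_1)+b(A_{-\lambda}-A_1)=\divisor(g)$ with $0\le a,b<m$. Since this divisor is supported on totally ramified points, it is fixed by the Galois group $\mu_m=\langle\sigma\rangle$ of $C/\PP^1_t$, so $\sigma(g)/g$ is a constant, hence an $m$-th root of unity $\zeta^k$; expanding $g$ in the $\Q(t)$-basis $1,x,\dots,x^{m-1}$ and using $\sigma(x)=\zeta x$ forces $g=x^kr(t)$ with $r\in\overline{\Q}(t)$. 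If $k=0$, then $\divisor(g)$ is pulled back from $\PP^1_t$, so its multiplicities at the totally ramified points $A_\lambda,A_{-\lambda}$ are divisible by $m$, forcing $a=b=0$. If $1\le k\le m-1$, then from $\divisor(g)=k\,\divisor(x)+\divisor(r(t))$ one sees that $r$ can have zeros and poles only at $t\in\{\pm\lambda,\pm1\}$ (any contribution over $t=\infty$ or elsewhere would push the support of $\divisor(g)$ outside the four points), and then the coefficient of $\divisor(g)$ along $A_{-1}$ equals $-k+m\,\ord_{t=-1}(r)$, which cannot vanish — a contradiction. Hence only $k=0$ occurs, so $a=b=0$, proving the independence (and, as a byproduct, that each class has exact order $m$); therefore $(\Z/m\Z)^2\hookrightarrow\Jac(C)(\Q)_{\tors}$, i.e. $\rank_m\Jac(C)(\Q)_{\tors}\ge2$, completing the proof. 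The only genuinely delicate point is this last bookkeeping with the $\mu_m$-eigenfunctions $x^kr(t)$; the factorization, the Weierstrass point, and the passage to the cyclic-cover model are all routine.
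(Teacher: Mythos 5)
Your proof is correct, and it is worth noting that the survey itself states Lemma~\ref{hlem} without proof, deferring to \cite{gl12}; so the only comparison available is with the route suggested there. The factorization $x^{2m}-(1+\lambda^2)x^m+\lambda^2=(x^m-1)(x^m-\lambda^2)$, the rational Weierstrass point $(1,0)$, and the caveat $\lambda\neq\pm1$ (needed for smoothness, and implicit in the statement) are all as expected. Where you genuinely diverge is in how the two $m$-torsion classes are produced and shown independent: the approach of \cite{gl12} stays in the hyperelliptic model, exhibiting explicit functions such as $y\pm\bigl(x^m-\tfrac{1+\lambda^2}{2}\bigr)$ (whose product is constant, so each has divisor $m$-divisible and supported at the points over $x=0$ and $x=\infty$) and checking independence within their general framework for superelliptic curves. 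You instead pass to the quotient $\PP^1_t$ with $t=y/(x^m-1)$, realize $C$ as the $\mu_m$-cover $x^m=\frac{(t-\lambda)(t+\lambda)}{(t-1)(t+1)}$ totally ramified over $t\in\{\pm\lambda,\pm1\}$, pull back the degree-zero divisors $(\lambda)-(1)$ and $(-\lambda)-(1)$, and prove independence by the eigenspace decomposition $g=x^k r(t)$ for a function realizing a putative relation. This is a clean, self-contained argument, and the case analysis is airtight: for $k=0$ the multiplicities at the totally ramified points are forced to be multiples of $m$, and for $1\le k\le m-1$ the coefficient $-k+m\,\ord_{t=-1}(r)$ at $A_{-1}$ is nonzero, contradicting the fact that $A_{-1}$ is absent from the support. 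The only cosmetic remarks: the deck transformation $\sigma:x\mapsto\zeta x$ and the basis $1,x,\dots,x^{m-1}$ live over $\Q(\mu_m)$ rather than $\Q$, which is harmless since a $\Q$-rational relation remains a relation after base change; and your method in fact shows the two classes generate $(\Z/m\Z)^2$ over $\overline{\Q}$, which is slightly stronger than what the lemma asserts.
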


Applying Corollary~\ref{HypCor} to this situation, we obtained the following quantitative version of Yamamoto's result \cite[Corollary~3.4]{gl12}.

\begin{cor}
\label{ri}
Let $m>1$ be an integer.  There exist $\gg X^{\frac{1}{2m-1}}/\log X$ imaginary (resp. real) quadratic number fields $k$ with ${|\Disc(k)|<X}$ and $\rank_m \Cl(k)\geq 2$ (resp. $\rank_m \Cl(k)\geq 1$).
\end{cor}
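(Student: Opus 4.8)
The plan is to apply Corollary~\ref{HypCor} directly to the curve furnished by Lemma~\ref{hlem}. Fix any $\lambda\in\Q^{\times}$ with $\lambda^{2}\neq 1$ — say $\lambda=2$ — and let $C$ be the smooth projective hyperelliptic curve over $\Q$ with affine model $y^{2}=x^{2m}-(1+\lambda^{2})x^{m}+\lambda^{2}$. First I would pin down its invariants: the right-hand side factors as $(x^{m}-1)(x^{m}-\lambda^{2})$, and since $\lambda\neq 0$ and $\lambda^{2}\neq 1$ these two factors are separable with no common root, so the defining polynomial is separable of degree $2m$ and $C$ has genus $g=m-1$; the point $(1,0)$ is a rational Weierstrass point, so the degree-$2$ map $x\colon C\to\PP^{1}$ is (totally) ramified over $1\in\PP^{1}(\Q)$, exactly the input required by Corollary~\ref{HypCor}. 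Lemma~\ref{hlem} supplies $\rank_{m}\Jac(C)(\Q)_{\tors}\geq 2$.

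Next I would simply invoke Corollary~\ref{HypCor} for this $C$ and this $m$. Since $2g+1=2m-1$, it produces $\gg X^{\frac{1}{2m-1}}/\log X$ imaginary quadratic fields $k$ with $|\Disc(k)|<X$ and $\rank_{m}\Cl(k)\geq\rank_{m}\Jac(C)(\Q)_{\tors}\geq 2$, and equally many real quadratic fields $k$ with $|\Disc(k)|<X$ and $\rank_{m}\Cl(k)\geq\rank_{m}\Jac(C)(\Q)_{\tors}-1\geq 1$, the $-1$ reflecting the unit rank of a real quadratic field. This is precisely the content of Corollary~\ref{ri}.

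The substantive input is Lemma~\ref{hlem} (the construction of two independent $m$-torsion classes in $\Jac(C)(\Q)$), which I am taking as granted; within the deduction itself there is essentially no obstacle, the only point needing a moment of care being the genus count — checking separability and $\deg=2m$ so that $g=m-1$ and the exponent $1/(2g+1)$ of Corollary~\ref{HypCor} is exactly the claimed $1/(2m-1)$. I would also remark that a single value of $\lambda$ already does the job: Corollary~\ref{HypCor} counts isomorphism classes of the fields $k$, so there is no need to let $\lambda$ vary, and the same curve and the same hyperelliptic map handle the imaginary and the real cases simultaneously.
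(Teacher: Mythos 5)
Your proposal is correct and is exactly the paper's route: the paper obtains Corollary~\ref{ri} by applying Corollary~\ref{HypCor} to the curve of Lemma~\ref{hlem}, and your verification that (for $\lambda^2\neq 1$) the model is separable of degree $2m$, hence $g=m-1$ and $2g+1=2m-1$, with $(1,0)$ a rational Weierstrass point, is precisely the bookkeeping needed to match the exponent. No gaps.
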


If $m$ is odd, then Byeon \cite{Bye} and Yu \cite{Yu} have proved, for imaginary and real quadratic fields, respectively, the better lower bound of $\gg X^{1/m-\epsilon}$.  If $m$ is even, in the real quadratic case a lower bound of $\gg X^{1/m}$ was proved by Chakraborty, Luca, and Mukhopadhyay \cite{CLM}.  The imaginary quadratic case of Corollary~\ref{ri} with $m$ even appears to be a new result of \cite{gl12}.


\subsection{$3$-ranks of quadratic fields: a construction of Craig}
\label{Craig}

In \cite{Cra77}, Craig constructed infinitely many imaginary (resp.\ real) quadratic fields $k$ with $\rank_3\Cl(k)\geq 4$ (resp. with $\rank_3\Cl(k)\geq 3$).  We prove quantitative versions of Craig's result and show how his constructions yield a hyperelliptic curve whose Jacobian has a rational subgroup isomorphic to $(\Z/3\Z)^4$.

Let $f$ be the polynomial
$$
f(x,y,z)=x^{6}+y^{6}+z^{6}-2x^3y^3-2x^3z^3-2y^3z^3.
$$
The idea in \cite{Cra77} is to find a nontrivial parametric family of solutions to the equations
\begin{equation*}
f(x_0,y_0,z_0)=f(x_1,y_1,z_1)=f(x_2,y_2,z_2).
\end{equation*}
Since 
\begin{equation*}
f(x,y,z)=(x^3+y^3-z^3)^2-4x^3y^3=(x^3-y^3+z^3)^2-4x^3z^3=(-x^3+y^3+z^3)^2-4y^3z^3,
\end{equation*}
it suffices to find solutions to
\begin{align}
x_1z_1=x_0z_0, \quad x_2y_2=x_0y_0,\label{Cr4}\\
x_1^3-y_1^3+z_1^3=-(x_0^3-y_0^3+z_0^3),\label{Cr5}\\
x_2^3+y_2^3-z_2^3=-(x_0^3+y_0^3-z_0^3).\label{Cr6}
\end{align}
Craig gives a two-parameter family of solutions to \eqref{Cr4}, \eqref{Cr5}, and \eqref{Cr6} in terms of $\alpha$, $\beta$, and $\gamma$ satisfying $\alpha+\beta+\gamma=0$.  We refer the reader to \cite{Cra77} for the rather involved formulas.  We specialize Craig's solution by setting $\alpha=0$, $\beta=t$, and $\gamma=-t$.  This gives a polynomial $h(t)=f(x_0(t),y_0(t),z_0(t))$ of degree $141$.  Let $C$ be the (nonsingular projective model of the) hyperelliptic curve defined by $Y^2=h(t)$.  We have the four identities (where $x_0=x_0(t)$, $y_0=y_0(t)$, etc.),
\begin{align*}
\left(Y+(x_0^3+y_0^3-z_0^3)\right)\left(Y-(x_0^3+y_0^3-z_0^3)\right)=-4x_0^3y_0^3,\\
\left(Y+(x_0^3-y_0^3+z_0^3)\right)\left(Y-(x_0^3-y_0^3+z_0^3)\right)=-4x_0^3z_0^3,\\
\left(Y+(x_1^3+y_1^3-z_1^3)\right)\left(Y-(x_1^3+y_1^3-z_1^3)\right)=-4x_1^3y_1^3,\\
\left(Y+(-x_2^3+y_2^3+z_2^3)\right)\left(Y-(-x_2^3+y_2^3+z_2^3)\right)=-4y_2^3z_2^3.
\end{align*}
It follows that there are divisors $D_1$, $D_2$, $D_3$, and $D_4$ on $C$ such that 
\begin{align*}
(Y+x_0^3+y_0^3-z_0^3)=3D_1,\\
(Y+x_0^3-y_0^3+z_0^3)=3D_2,\\
(Y+x_1^3+y_1^3-z_1^3)=3D_3,\\
(Y-x_2^3+y_2^3+z_2^3)=3D_4.
\end{align*}
Using Magma, it is easy to verify that $D_1$, $D_2$, $D_3$, and $D_4$ give independent $3$-torsion elements of $\Jac(C)(\mathbb{Q})$ (to simplify calculations, this can be done modulo $p=7$, a prime of good reduction of $C$).  Thus, we arrive at the following result.

\begin{thm}
Let $C$ be the  hyperelliptic curve defined by $Y^2=h(t)$.  Then
$$
\rank_3 \Jac(C)(\mathbb{Q})_{\tors}\geq 4.
$$
\end{thm}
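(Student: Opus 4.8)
The plan is to follow exactly the recipe laid out in the paragraph preceding the statement: exhibit four explicit divisor classes $D_1, D_2, D_3, D_4$ on $C$, show each is $3$-torsion in $\Jac(C)(\Q)$, and show they are independent, so that they generate a subgroup isomorphic to $(\Z/3\Z)^4$, which gives $\rank_3 \Jac(C)(\Q)_{\tors} \geq 4$. First I would record that, by Craig's construction, the specialized polynomials $x_0(t), y_0(t), z_0(t), x_1(t), y_1(t), z_1(t), x_2(t), y_2(t), z_2(t)$ satisfy the identities \eqref{Cr4}, \eqref{Cr5}, \eqref{Cr6} (after the substitution $\alpha = 0$, $\beta = t$, $\gamma = -t$), and hence the four displayed factorizations of $Y^2 - (\cdots)^2 = -4(\cdots)^3$ hold as identities in $\Q(C) = \Q(t, Y)$ with $Y^2 = h(t)$.

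The key observation is that in each of the four identities the right-hand side is $-4$ times a perfect cube of a rational function, while the left-hand side is a product of two functions; since $Y^2 = h(t)$ and $h = f(x_0,y_0,z_0)$, the function $Y - (x_0^3 + y_0^3 - z_0^3)$ divides $-4 x_0^3 y_0^3$ up to the complementary factor, and a valuation-by-valuation analysis shows that the divisor of $Y + x_0^3 + y_0^3 - z_0^3$ is divisible by $3$ in the divisor group of $C$ — indeed this is precisely the content of writing $(Y + x_0^3 + y_0^3 - z_0^3) = 3D_1$ as an equality of divisors, and similarly for $D_2, D_3, D_4$. Consequently $3[D_i] = [\divisor(\text{unit-free function})] = 0$ in $\Jac(C)$ for each $i$, so each $[D_i]$ is a $\Q$-rational point of $\Jac(C)$ of order dividing $3$. (One must check the $D_i$ are actually defined over $\Q$, which is immediate since the $x_j(t), y_j(t), z_j(t)$ lie in $\Q[t]$.)

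It remains to prove that $[D_1], [D_2], [D_3], [D_4]$ are independent in $\Jac(C)(\Q)[3] \cong (\Z/3\Z)^s$, i.e.\ that no nontrivial $\mathbb{F}_3$-linear combination $a_1 D_1 + a_2 D_2 + a_3 D_3 + a_4 D_4$ is principal. The clean way to do this is to reduce modulo a prime of good reduction: one checks (by the Jacobian criterion, examining the discriminant of $h$) that $p = 7$ is a prime of good reduction for $C$, so that reduction mod $7$ gives an injection on prime-to-$7$ torsion $\Jac(C)(\Q)[3] \hookrightarrow \Jac(C_{\mathbb{F}_7})(\mathbb{F}_7)$; then it suffices to verify the independence of the reductions $\overline{D_1}, \dots, \overline{D_4}$ in the finite group $\Jac(C_{\mathbb{F}_7})(\mathbb{F}_7)$, which is a finite computation carried out in Magma. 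The main obstacle is genuinely this last independence check: the curve has enormous genus (from a degree-$141$ polynomial $h$, so $g = 70$), and the $D_i$ are cut out by high-degree polynomials, so verifying independence is only feasible as a machine computation — reducing mod $7$ is what makes it tractable, and one has to trust (or spot-check) that Magma's arithmetic in the Jacobian of a high-genus hyperelliptic curve over $\mathbb{F}_7$ is correct. Apart from that, there is essentially nothing to do beyond assembling the pieces, since the heavy lifting — producing the simultaneous solutions to \eqref{Cr4}--\eqref{Cr6} — was already done by Craig.
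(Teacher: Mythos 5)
Your proposal matches the paper's argument essentially step for step: the four identities coming from Craig's parametric solutions give divisors $D_1,\dots,D_4$ with $3D_i$ principal, hence rational $3$-torsion classes, and their independence is verified by a Magma computation after reducing modulo the good prime $p=7$ (using injectivity of reduction on prime-to-$p$ torsion). This is exactly the paper's proof, so there is nothing to add.
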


Since $h$ has odd degree, $C$ has a rational Weierstrass point, and so Corollary~\ref{HypCor} applies.

\begin{cor}
There exist $\gg X^{\frac{1}{141}}/\log X$ imaginary (resp. real) quadratic fields $k$ with ${|\Disc(k)|<X}$ and $\rank_3 \Cl(k)\geq 4$ (resp. $\rank_3 \Cl(k)\geq 3$).
\end{cor}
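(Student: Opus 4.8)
The plan is to obtain this as a direct application of Corollary~\ref{HypCor} to the curve $C : Y^2 = h(t)$, fed by the lower bound $\rank_3 \Jac(C)(\Q)_{\tors} \geq 4$ established in the theorem above.

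First I would check the two hypotheses of Corollary~\ref{HypCor}. Since $h$ has degree $141$, an odd integer, the smooth projective model $C$ has a unique point at infinity, and this point is a rational Weierstrass point; thus $C$ is a hyperelliptic curve over $\Q$ with a rational Weierstrass point. Its genus is $g = \lfloor (141-1)/2 \rfloor = 70$, so that $2g+1 = 141$. With these in hand, Corollary~\ref{HypCor} applied with $m = 3$ produces $\gg X^{1/(2g+1)}/\log X = \gg X^{1/141}/\log X$ imaginary (resp.\ real) quadratic fields $k$ with $|\Disc(k)| < X$ and
$$
\rank_3 \Cl(k) \geq \rank_3 \Jac(C)(\Q)_{\tors} \qquad \Bigl(\text{resp. } \rank_3 \Cl(k) \geq \rank_3 \Jac(C)(\Q)_{\tors} - 1\Bigr).
$$
Substituting $\rank_3 \Jac(C)(\Q)_{\tors} \geq 4$ yields $\rank_3 \Cl(k) \geq 4$ in the imaginary case and $\rank_3 \Cl(k) \geq 3$ in the real case, which is exactly the assertion.

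There is essentially no obstacle in this last deduction: all the real work --- specializing Craig's two-parameter family at $\alpha = 0$, $\beta = t$, $\gamma = -t$ to produce $h$, exhibiting the divisor classes $D_1, \dots, D_4$, and verifying (with Magma, reducing modulo the good prime $7$) that they are independent of order $3$ in $\Jac(C)(\Q)$ --- has already been carried out in the construction preceding the theorem, and it is there that any difficulty lies. The one point worth recording is that the exponent $1/141$ is the sharpened hyperelliptic exponent $1/(2g+1)$ supplied by Corollary~\ref{HypCor}, rather than the exponent $1/(2n(d-1))$ that the general Theorem~\ref{thquant} would give with $d = 2$ and a coordinate function $x = Y$ of degree $n = 141$; it is precisely the availability of this improvement for curves with a rational Weierstrass point that makes the count $\gg X^{1/141}/\log X$ and not merely $\gg X^{1/282}/\log X$.
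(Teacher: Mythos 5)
Your proof is correct and is exactly the paper's argument: the paper likewise notes that $h$ has odd degree $141$, so $C$ has a rational Weierstrass point of genus $g=70$, and applies Corollary~\ref{HypCor} with $m=3$ together with the bound $\rank_3\Jac(C)(\Q)_{\tors}\geq 4$ to get the exponent $1/(2g+1)=1/141$. Your closing comparison with the weaker exponent from Theorem~\ref{thquant} is a correct extra observation not made in the paper.
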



\subsection{$5$-ranks of quadratic fields: a construction of Mestre}
\label{Mestre}

In \cite{Mes92}, Mestre proved the existence of infinitely many imaginary and real quadratic fields $k$ with $\rank_5\Cl(k)\geq 3$. We briefly review his construction. For the reader's convenience, we stick to the original notation. Mestre constructs:
\begin{enumerate}
\item[(1)] a genus $5$ hyperelliptic curve $C$ defined over $\Q$, which admits three rational Weierstrass points;
\item[(2)] three elliptic curves $E_1$, $E_2$ and $E_3$ defined over $\Q$, each of them endowed with an isogeny $\varphi_i:E_i\to F_i$ with kernel $\Z/5\Z$;
\item[(3)] three independent Galois covers $\tau_i:C\to F_i$ with group $(\Z/2\Z)^2$.
\end{enumerate}

The existence of the maps $\tau_i$ implies that the Jacobian of~$C$ splits, and that each of the $F_i$ is an isogenus factor of $\Jac(C)$ via an isogeny of degree $4$. More precisely, there exists an abelian surface $B$ and an isogeny
$$
F_1\times F_2\times F_3 \times B \longrightarrow \Jac(C)
$$
whose degree is a power of $2$.

On the other hand, the dual isogeny $\hat{\varphi_i}:F_i\to E_i$ has kernel $\mu_5$, because the kernel of the $\hat{\varphi_i}$ is the Cartier dual of the kernel of  $\varphi_i$. Hence $\Jac(C)$ contains $\mu_5^3$ as a subgroup, which means in our terminology that $\rank_{\mu_5} \Jac(C) \geq 3$.

Applying Theorem~\ref{thnew} to this situation, we obtain the following quantitative version of Mestre's result.

\begin{thm}
There exist $\gg X^{\frac{1}{11}}/\log X$ imaginary (resp. real) quadratic fields $k$ with ${|\Disc(k)|<X}$ such that $\rank_5 \Cl(k) \geq 3$.
\end{thm}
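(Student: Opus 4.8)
The plan is to invoke Theorem~\ref{thnew} and Theorem~\ref{thquant} directly, using Mestre's curve $C$ as input. By item (1), $C$ is a genus $5$ hyperelliptic curve with (at least one) rational Weierstrass point, so the hyperelliptic map $x\colon C\to\PP^1$ of degree $d=2$ is totally ramified over the corresponding rational point of $\PP^1(\Q)$. Hence $C$ satisfies the hypotheses of Theorems~\ref{thold} and~\ref{thnew} with $d=2$. The discussion preceding the statement establishes that $\Jac(C)$ contains $\mu_5^3$ as a $\Gal(\overline{\Q}/\Q)$-submodule: the dual isogenies $\hat{\varphi_i}\colon F_i\to E_i$ have kernels the Cartier duals of $\Z/5\Z$, namely $\mu_5$, and these three copies are independent because the $F_i$ appear (up to an isogeny of $2$-power degree) as distinct isogeny factors of $\Jac(C)$, so the $\mu_5^3$ they generate inside $\Jac(C)$ is not collapsed. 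Therefore $\rank_{\mu_5}\Jac(C)\ge 3$, and Theorem~\ref{thnew} produces infinitely many quadratic fields $k$ with $\rank_5\Cl(k)\ge 3$.

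Next I would extract the quantitative bound from Theorem~\ref{thquant}. Here $d=2$, so the exponent is $1/\bigl(2n(d-1)\bigr)=1/(2n)$, where $n=\deg x$ is the degree of a rational function $x\in\Q(C)$ with $\Q(C)=\Q(\phi,x)$, and $\phi$ is the function defining the totally ramified degree-$2$ map. Since $C$ is hyperelliptic of genus $5$, one may take $\phi$ to be (a Möbius transform of) the hyperelliptic $x$-coordinate itself — already of degree $2$ — together with a function $x$ of degree $2g+1=11$ (for instance the $y$-coordinate of a model $y^2=h(t)$ with $\deg h=2g+1=11$, moved so the rational Weierstrass point is at infinity), giving $\Q(C)=\Q(\phi,x)$ with $n=11$. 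Plugging $n=11$ into Theorem~\ref{thquant} yields $\gg X^{1/22}/\log X$; to get the sharper exponent $1/11$ claimed, I would instead invoke Corollary~\ref{HypCor}, which for a hyperelliptic curve with a rational Weierstrass point of genus $g$ gives $\gg X^{1/(2g+1)}/\log X$ — but Corollary~\ref{HypCor} is the quantitative form of Theorem~\ref{thold}, not Theorem~\ref{thnew}. The resolution, which I would make explicit, is that the argument of Corollary~\ref{HypCor} (specialization over a rational function of degree $2g+1$, counting imaginary/real quadratic specializations) applies verbatim to the Class-Field-Theory construction of Theorem~\ref{thnew} in the hyperelliptic case, since the counting input (the Dvornicich--Zannier, or Bilu--Luca, quantitative HIT applied to the degree-$2(d-1)(2g+1)=2(2g+1)$ cover over $\PP^1$, here of degree $22$) is identical; with $g=5$ this gives the exponent $1/11$.

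The main obstacle is precisely this last point: marrying the refined hyperelliptic counting of Corollary~\ref{HypCor} with the $\mu_m$-version of the construction. One must check that Mestre's curve genuinely has genus $5$ and a rational Weierstrass point (item (1) asserts three such points, so this is given), and that the everywhere-unramified degree-$5$ extension of $\Q(C)$ underlying the $\mu_5^3\subset\Jac(C)$ — the object playing, in the Class Field Theory proof of Theorem~\ref{thnew}, the role that $\Q(C)(\sqrt[m]{1-y})$ played in the toy example — specializes well over a degree-$(2g+1)$ parametrization, so that Hilbert irreducibility applied to the composite cover of degree $2\cdot 5\cdot(2g+1)=110$ over $\PP^1$... wait: one wants the smaller count $1/(2g+1)$, which comes from counting only the quadratic base and noting that each good specialization automatically gives the $5$-rank bound, so the relevant HIT is for the degree-$2(2g+1)=22$ cover $C\to\PP^1$ composed with the Weierstrass-centered degree-$(2g+1)$ map, exactly as in Corollary~\ref{HypCor}. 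Once this compatibility is recorded, the exponent $\tfrac{1}{11}$ and the statement follow. I would close by remarking that, as noted after Theorem~\ref{thquant}, the Bilu--Luca improvement of quantitative HIT would further improve the exponent, but we state the result with the Dvornicich--Zannier bound for uniformity with the rest of the paper.
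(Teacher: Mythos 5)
Your proposal is correct and follows the paper's own route: establish $\rank_{\mu_5}\Jac(C)\geq 3$ from Mestre's construction (the $2$-power-degree isogeny cannot collapse the $\mu_5^3$ coming from the kernels of the dual isogenies $\hat{\varphi_i}$), then apply Theorem~\ref{thnew} together with the hyperelliptic quantitative refinement for a genus~$5$ curve with a rational Weierstrass point, giving the exponent $1/(2g+1)=1/11$ rather than the weaker $1/22$ from Theorem~\ref{thquant}. The compatibility issue you flag as the ``main obstacle'' --- that Corollary~\ref{HypCor} is stated as the quantitative form of Theorem~\ref{thold} rather than of Theorem~\ref{thnew} --- is already resolved in the text, which records explicitly in the paragraph following Question~\ref{Q1} that Corollary~\ref{HypCor} admits the natural analogue with $\rank_m \Jac(C)(\Q)_{\tors}$ replaced by $\rank_{\mu_m}\Jac(C)$, and this analogue (with no $-1$ in the real case) is exactly what is invoked.
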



\subsection{Higher degree fields}
\label{Aaron}

Let us fix integers $m,r>1$ with $(r,m)=1$. Consider a superelliptic curve $C$ defined by an affine equation of the form
$$
y^m=a_0\prod_{i=1}^r(x-a_i),
$$
where $a_1,\ldots, a_r$ are pairwise distinct rational numbers, and $a_0\in\Q^\times$. Then $x$ and $y$ are rational functions on $C$ with $\deg x=m$ and $\deg y=r$. Since $(r,m)=1$, the curve $C$ has a unique point at infinity, that we denote by $\infty$, and $x$ and $y$ are totally ramified at that point. For each $i$, let $P_i$ be the rational point on $C$ with affine coordinates $(a_i,0)$. Then one has
$$
\divisor(x-a_i)=mP_i-m\infty.
$$

A classical argument shows that the divisor classes $(P_i-\infty)_{i=1}^r$ generate a subgroup of $\Jac(C)(\Q)$ isomorphic to $(\Z/m\Z)^{r-1}$.

Applying Theorem~\ref{thold} to the map $x:C\to \PP^1$, one recovers the result of Brumer and Rosen (first line in Table~2). By considering the map $y:C\to \PP^1$, one recovers results of Azuhata and Ichimura (line~4 in Table~2). Using Hilbert's irreducibility theorem, quantitative versions of these results were obtained in \cite{levin07}.

Using other maps, it was shown in \cite{levin07} that in some situations it is possible to improve on Nakano's inequality \eqref{eqN1} (line~5 in Table~2).

\begin{thm}
\label{thLev2}
Let $m,d>1$ be integers with $d>(m-1)^2$.  There exist $\gg X^{\frac{1}{(m+1)d-1}}/\log X$ number fields $k$ of degree $d$ with ${|\Disc(k)|<X}$ and
\begin{equation*}
\rank_m \Cl(k)\geq \left\lceil \left\lfloor\frac{d+1}{2}\right\rfloor+\frac{d}{m-1}-m\right\rceil.
\end{equation*}
\end{thm}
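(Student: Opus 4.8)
The plan is to imitate the proof of the toy example and of Theorem~\ref{thold}, but with a judiciously chosen pair (curve, morphism), as the phrase ``using other maps'' above suggests. For the curve I would take the superelliptic curve $C\colon y^m=a_0\prod_{i=1}^r(x-a_i)$ of \S\ref{Aaron} with $a_1,\dots,a_r\in\Q$ pairwise distinct and $\gcd(r,m)=1$, choosing $r$ minimal subject to
$$
r\ \ge\ 2\Big\lceil\tfrac d2\Big\rceil-m+\Big\lceil\tfrac{d}{m-1}\Big\rceil .
$$
Such an $r$ exists within distance $m-1$ of this bound (one of any $m$ consecutive integers is coprime to $m$), and overshooting only strengthens the conclusion. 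By \S\ref{Aaron} the classes $P_i-\infty$ span a subgroup $(\Z/m\Z)^{r-1}\subseteq\Jac(C)(\Q)$, so $\rank_m\Jac(C)(\Q)_\tors\ge r-1$; the coefficients $a_0,\dots,a_r$ are otherwise free and will be used at the end to fix some signs. One checks that the hypothesis $d>(m-1)^2$ forces $r>d$ as soon as $d$ is odd; when $r$ equals $d$ (which can only happen for $d$ even) one may simply take $\phi=y\colon C\to\PP^1$, of degree $d$ and totally ramified over $\infty$, and argue as below.

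Next I would exhibit a finite morphism $\phi\colon C\to\PP^1$ of degree $d$ to which a specialization result in the spirit of Theorem~\ref{thold} applies. When $d$ belongs to the numerical semigroup $\langle m,r\rangle$ of non-gaps of $\infty$ on $C$, I take $\phi\in L(d\infty)$ with polar divisor exactly $d\infty$; then $\phi$ is totally ramified over the rational point $\infty\in\PP^1(\Q)$ and Theorem~\ref{thold} applies verbatim. The crux is that for $r>d$ one has $d\in\langle m,r\rangle$ only when $m\mid d$, so for $m\nmid d$ the curve $C$ admits no degree-$d$ morphism to $\PP^1$ totally ramified over a $\Q$-rational point; here I would instead appeal to the variant of Theorem~\ref{thold} with the weaker, purely local ramification hypothesis announced above, applied to a degree-$d$ function $\phi$ built from $x$, the linear forms $x-a_i$ and $y$ whose polar divisor is supported at a single closed point of $C$ of degree $1$ or $2$.

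With such a $\phi$ I would then, exactly as in the proof of Theorem~\ref{thold}, restrict the specialization parameter $t$ to a real interval abutting the ramification value of $\phi$. On that interval the $d$ points of $\phi^{-1}(t)$ cluster at the (degree $\le 2$) ramification point, and in a local coordinate they are governed by an equation of the shape $u^d\approx c/t$; choosing the sign of $c$ (which the choice of $a_0$ controls) against the parity of $d$ leaves at most one real point in the fibre, so the resulting degree-$d$ field $k$ has at most one real place and $\rank_\Z\mathcal O_k^\times\le\lceil d/2\rceil-1$. Feeding this bound and $\rank_m\Jac(C)(\Q)_\tors\ge r-1$ into (the variant of) Theorem~\ref{thold} produces infinitely many number fields $k$ of degree $d$ with
$$
\rank_m\Cl(k)\ \ge\ (r-1)-\big(\lceil d/2\rceil-1\big)\ =\ r-\lceil d/2\rceil\ \ge\ \Big\lceil\Big\lfloor\tfrac{d+1}2\Big\rfloor+\tfrac{d}{m-1}-m\Big\rceil ,
$$
the last step being the defining inequality for $r$ rearranged.

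For the quantitative statement I would apply a quantitative Hilbert irreducibility theorem (Dvornicich--Zannier, as in Theorem~\ref{thquant}, or its sharpening used in \cite{levin07}) to the composite cover $\tilde C\to C\xrightarrow{\ \phi\ }\PP^1$, bounding $|\Disc(k)|$ in terms of the height of $t$: writing $k=\Q(x(P))$ for $P\in\phi^{-1}(t)$, the minimal polynomial of $x(P)$ has degree $d$ and coefficients polynomial in $t$ of degree dictated by the shape of $C$, whence, on an integral model, $|\Disc(k)|\ll|t|^{(m+1)d-1}$; counting admissible $t$ with $|t|\le T$ then yields $\gg X^{1/((m+1)d-1)}/\log X$ such fields with $|\Disc(k)|\le X$. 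I expect the main obstacle to be the construction and verification in the second paragraph: one must confirm that the weakened ramification hypothesis of the variant of Theorem~\ref{thold} is still strong enough to drive the Chevalley--Weil and Krasner arguments at the bad primes for $m\nmid d$, while remaining compatible with the real-interval restriction of the third paragraph. A secondary tension is that enlarging $r$ to raise $\rank_m\Jac(C)(\Q)_\tors$ also raises the genus of $C$ and the degree of the auxiliary function, which must stay small enough to preserve the exponent $1/((m+1)d-1)$.
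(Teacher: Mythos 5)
Your choice of curve and your identification of the $(\Z/m\Z)^{r-1}$ of rational torsion coming from the classes $P_i-\infty$ match the paper, and you are right that the totally-ramified hypothesis of Theorem~\ref{thold} has to be replaced by something weaker (this is exactly Theorem~\ref{variant1}). But the two constructions that actually carry the proof are missing from your sketch. First, the degree-$d$ map. The paper does not seek a function whose polar divisor is concentrated at a single closed point of degree $1$ or $2$; it takes $f(x)$ to be the Taylor expansion of $\sqrt[m]{h(x)}$ at $x=0$, truncated so that $\ord_x(f^m-h)\ge\lfloor r/m\rfloor\ge r-d$, and sets $\psi=b(y-f)/x^{r-d}$. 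The pole cancellation forced by this truncated $m$-th root is what produces a function of degree exactly $d$ on a curve whose Weierstrass semigroup at $\infty$ would otherwise forbid it; this is the content of the phrase ``using other maps,'' and nothing in your proposal produces such a function when $m\nmid d$. Note also that the operative constraint on $r$ is $r-\lfloor r/m\rfloor\le d$ (needed for $\psi$ to exist and have degree $d$), which you never impose; you only arrange a lower bound on $r$.

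Second, the hypothesis of Theorem~\ref{variant1} is not a local ramification condition on $\phi$ but the Selmer condition \eqref{SelCond}: one must show $v(g_i(P_t))\equiv 0\pmod m$ at \emph{every} finite place, including the bad ones. In the paper this is arranged by taking the roots of $h$ to be $m$-th powers $a_1^m,-a_2^m,\dots,-a_r^m$ of carefully chosen integers and imposing a congruence $c\equiv c_0\pmod{\Delta_0}$ on the specialization parameter (Lemma~3.3 of \cite{levin07}); with generic pairwise distinct rational $a_i$ as in your proposal there is no reason for this to hold, and you do not address it. Your mechanism for bounding the unit rank also fails as stated: the local model $u^d\approx c/t$ governs only those points of the fibre lying near the chosen ramification point, and since you have conceded that for $m\nmid d$ the map is not totally ramified there, the remaining points of the fibre can contribute many real places; the paper instead gets ``at most two real places for $t\gg 0$'' from a direct analysis of the real fibres of $\psi$ (Lemma~3.4 of \cite{levin07}). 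Once these two constructions are in place, the numerology (the discriminant exponent $(m+1)d-1$ and the final rank bound) does come out essentially as you predict.
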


A detailed proof of this theorem is given in \cite{levin07}, but it is possible to give a simpler proof by using the following variant of Theorem~\ref{thold}.

\begin{thm}
\label{variant1}
Let $C$ be a smooth projective geometrically irreducible curve over $\Q$, let $\Jac(C)$ be the Jacobian of $C$, and let $m>1$ be an integer. Let $s=\rank_m \Jac(C)(\Q)_{\tors}$, and let $D_1,\dots, D_s$ be divisors on $C$ whose classes in $\Jac(C)(\Q)$ generate a subgroup isomorphic to $(\Z/m\Z)^s$. Let $g_1,\dots,g_s$ be rational functions on $C$ such that $\divisor(g_i)=mD_i$ for all $i$. Assume that there exists a finite map $\phi:C\to\PP^1$ of degree $d$ such that, for all $t\in\N$, the point $P_t:=\phi^{-1}(t)$ has the property that
\begin{equation}
\label{SelCond}
g_1(P_t),\dots,g_s(P_t) ~\text{define classes in}~ \Sel^m(\Q(P_t)),
\end{equation}
where $\Sel^m$ is defined in \eqref{Sm}. Then there exist infinitely many $t\in\N$ such that $[\Q(P_t):\Q]=d$ and
$$
\rank_m \Cl(\Q(P_t))\geq s-\rank_\Z \mathcal{O}_{\Q(P_t)}^\times.
$$
Moreover, there are infinitely many isomorphism classes of such fields $\Q(P_t)$.
\end{thm}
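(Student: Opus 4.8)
The plan is to follow the template of the geometric proof of the toy example, using the hypothesis \eqref{SelCond} to bypass the Chevalley--Weil/Krasner discussion that was needed there to establish condition $(i)$. First I would fix, for each $i$, an $m$-th root $\sqrt[m]{g_i}$ in a fixed algebraic closure of $\Q(C)$, and consider the compositum $\tilde C = C\big(\sqrt[m]{g_1},\dots,\sqrt[m]{g_s}\big)$, i.e.\ the normalization of $C$ in this field extension. Because $\divisor(g_i) = mD_i$ for every $i$, the valuation of each $g_i$ at every place of $C$ is divisible by $m$, so by the Kummer ramification criterion the extension $\Q(C)(\sqrt[m]{g_1},\dots,\sqrt[m]{g_s})/\Q(C)$ is unramified at every place of $C$; hence $\tilde C\to C$ is an étale cover. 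Its degree is $m^s$: this is exactly the statement that the classes $[D_1],\dots,[D_s]$ generate a subgroup isomorphic to $(\Z/m\Z)^s$ in $\Jac(C)(\Q)$ — the $g_i$ are independent modulo $m$-th powers of rational functions precisely because no nontrivial $\Z/m\Z$-combination of the $[D_i]$ is a principal divisor. The cover is moreover geometrically connected for the same reason, after possibly enlarging $\Q$ to kill $\mu_m$ — but see the final paragraph for how condition $(iii)$-type issues are handled.

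Next I would apply Hilbert's irreducibility theorem to the composite morphism $\tilde C \xrightarrow{f} C \xrightarrow{\phi} \PP^1$ of degree $m^s d$. Over the rational points $t\in\N$, HIT yields infinitely many $t$ for which $f^{-1}(P_t)$ is irreducible over $\Q$, i.e.\ $[\Q(f^{-1}(P_t)):\Q] = m^s d$. For such $t$ we get $[\Q(P_t):\Q] = d$ (so $P_t$ generates a degree-$d$ field $k := \Q(P_t)$) and $[\,k(\sqrt[m]{g_1(P_t)},\dots,\sqrt[m]{g_s(P_t)}\,):k] = m^s$; that is, the specialized values $g_1(P_t),\dots,g_s(P_t)$ are independent in $k^\times/(k^\times)^m$ and each has exact order $m$ there. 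Combined with the hypothesis \eqref{SelCond}, they define $s$ independent classes of exact order $m$ in $\Sel^m(k)$. Now I invoke the exact sequence
\begin{equation*}
\begin{CD}
1 @>>> \mathcal{O}_k^\times/(\mathcal{O}_k^\times)^m @>>> \Sel^m(k) @>>> \Cl(k)[m] @>>> 0
\end{CD}
\end{equation*}
exactly as in the toy example: taking $m$-ranks, the image of the span of $g_1(P_t),\dots,g_s(P_t)$ in $\Cl(k)[m]$ has $m$-rank at least $s - \rank_m \mathcal{O}_k^\times/(\mathcal{O}_k^\times)^m \geq s - \rank_\Z\mathcal{O}_k^\times$, which is the claimed bound. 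Finally, the infinitude of \emph{isomorphism classes} of the fields $k$ follows from a standard discriminant-growth argument: only finitely many $t$ can give rise to any fixed number field $k$ (since $P_t$ determines $t = \phi(P_t)$ up to the $\phi$-fibre, which is finite, and there are finitely many embeddings $k\hookrightarrow\overline\Q$), so infinitely many $t$ force infinitely many isomorphism classes; alternatively one reads this off the quantitative HIT of Dvornicich--Zannier as in Theorem~\ref{thquant}.

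The main obstacle, and the one subtlety not present in the toy example, is the interaction between geometric connectedness of $\tilde C$ and the cyclotomic condition analogous to $(iii)$: a priori $\tilde C$ need only be connected over $\Q$, and its geometric components are permuted by $\Gal(\overline\Q/\Q)$ through its action on $\mu_m$ and on the $[D_i]$. Since by hypothesis the $D_i$ have classes in $\Jac(C)(\Q)$ — i.e.\ are already $\Gal$-invariant — the geometric components of $\tilde C$ are indexed by $\Hom((\Z/m\Z)^s,\mu_m)$ with the Galois action factoring through $\Gal(\Q(\mu_m)/\Q)$, so $\tilde C$ is geometrically connected after base change to $\Q(\mu_m)$; one then runs HIT over $\Q(\mu_m)$, or more cleanly applies HIT directly to the $\Q$-irreducible cover $\tilde C\to C\to\PP^1$ (which is what actually matters: $\Q$-irreducibility of the generic fibre is preserved by HIT, and that already gives $[\Q(f^{-1}(P_t)):\Q]=m^sd$). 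The one place care is genuinely needed is in passing from "$g_i(P_t)$ has exact order $m$ in $\Sel^m(k)$'' to the independence statement — this requires that the degree $[\,k(\sqrt[m]{g_1(P_t)},\dots):k]$ is the full $m^s$, which is exactly what $\Q$-irreducibility of $f^{-1}(P_t)$ delivers, provided one has checked that the function-field extension $\Q(C)(\sqrt[m]{g_1},\dots,\sqrt[m]{g_s})/\Q(C)$ itself has degree $m^s$; and that, once more, is a restatement of the hypothesis that $[D_1],\dots,[D_s]$ span $(\Z/m\Z)^s$ in the Jacobian. Everything else is the routine bookkeeping of the toy-example argument.
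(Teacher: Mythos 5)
Your overall architecture is exactly the one the paper intends: the survey itself only says that Theorem~\ref{variant1} ``can be proved along the lines of the toy example'' with reference to \cite[Theorem~2.4]{gl12}, and your cover $\tilde C$, the application of HIT to $\tilde C\to C\to\PP^1$, and the Selmer exact sequence are all the right ingredients. (Incidentally, your worry about geometric connectedness is moot: since the classes $[D_i]$ generate $(\Z/m\Z)^s$ already in $\Jac(C)(\overline\Q)$, Kummer theory over $\overline\Q(C)\supset\mu_m$ shows $[\overline\Q(C)(\sqrt[m]{g_1},\dots,\sqrt[m]{g_s}):\overline\Q(C)]=m^s$, so $\tilde C$ is geometrically irreducible.) The gaps are in the endgame, and both are genuine for composite $m$. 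First, the implication ``$[k(\sqrt[m]{g_1(P_t)},\dots,\sqrt[m]{g_s(P_t)}):k]=m^s$ hence the $g_i(P_t)$ generate $(\Z/m\Z)^s$ in $k^\times/(k^\times)^m$'' is false when $\mu_m\not\subset k$: for instance $[\Q(\sqrt[4]{-1}):\Q]=4$ while $-1$ has order $2$ in $\Q^\times/(\Q^\times)^4$. Without that independence, the kernel of $(\Z/m\Z)^s\to\Cl(k)$ need not embed into $\mathcal{O}_k^\times/(\mathcal{O}_k^\times\cap(k^\times)^m)$, and the whole count collapses. The standard repair is to apply HIT also to the base change of the (geometrically irreducible) cover $\tilde C\to\PP^1$ to $\Q(\mu_m)$, so that the fibre is irreducible over $\Q(\mu_m)$ as well; genuine Kummer theory over $k(\mu_m)$ then gives independence there, hence in $k^\times/(k^\times)^m$ by functoriality.

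Second, your unit bookkeeping is off in two ways. The step ``taking $m$-ranks in the exact sequence'' is not legitimate for composite $m$ ($\rank_m$ is not subadditive: consider $0\to\Z/2\to\Z/4\to\Z/2\to0$ with $m=4$); the correct argument works with the explicit subgroup $G\cong(\Z/m\Z)^s$ and the lemma that $(\Z/m\Z)^s$ modulo a subgroup needing $u$ generators still contains $(\Z/m\Z)^{s-u}$. And the inequality $\rank_m\bigl(\mathcal{O}_k^\times/(\mathcal{O}_k^\times)^m\bigr)\le\rank_\Z\mathcal{O}_k^\times$ — more to the point, the claim that the kernel $G\cap\mathcal{O}_k^\times(k^\times)^m$ needs at most $\rank_\Z\mathcal{O}_k^\times$ generators — fails when the torsion units $\mu(k)$ survive modulo $m$-th powers. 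This is precisely why the toy example carries condition $(iii)$ (linear disjointness of $\Q(P)$ from $\Q(\mu_m)$), which you have silently dropped; it must be reinstated and arranged by a further application of HIT (it comes for free from the enlarged cover above), and the case of $-1$ when $m$ is even needs a separate word. Finally, your first argument for infinitude of isomorphism classes (``only finitely many $t$ give a fixed $k$'') is not valid when $C$ has genus $1$ and $C(k)$ is infinite; only your fallback via the quantitative HIT of Dvornicich--Zannier actually works, and that is the argument the paper uses.
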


This statement is a generalization of Theorem~\ref{thold}, in which the condition on the existence of a totally ramified point for the map $\phi$ is replaced by a more technical one. In explicit examples, this technical condition usually comes from a congruence condition on the coordinates of the point $P_t$ (see proof of Theorem~\ref{thLev2} below).

Theorem~\ref{variant1} can be proved along the lines of the toy example. We refer to the proof of \cite[Theorem~2.4]{gl12}, in which all needed arguments already appear. Needless to say, Theorem~\ref{variant1} admits the same quantitative version as the previous ones, as stated in Theorem~\ref{thquant}.

\begin{proof}[Proof of Theorem~\ref{thLev2}]
Let $m,d>1$ be integers with $d>(m-1)^2$.  Let $r$ be the largest integer such that $r-\left\lfloor\frac{r}{m}\right\rfloor\leq d$ and $(r,m)=1$.  It is easily checked that $r\geq d+\frac{d}{m-1}-m+1$.  Let $C$ be the curve defined by
$$
y^m=h(x)=-(x-a_1^m)\prod_{i=2}^r (x+a_i^m)
$$
where $a_1,\ldots, a_r$ are certain carefully chosen integers \cite[Lemma~3.1]{levin07}. For $i=2,\dots,r$, we let $g_i:=x+a_i^m$. Then $\divisor(g_i)=mP_i-m\infty$ where $P_i=(-a_i^m,0)$, and, as noted above, the divisor classes $(P_i-\infty)_{i=2}^r$ generate a subgroup isomorphic to $(\Z/m\Z)^{r-1}$ in $\Jac(C)(\Q)$.

Let $f(x)$ be the Taylor series for $\sqrt[m]{h(x)}$ at $x=0$ truncated to degree $\left\lfloor\frac{r}{m}\right\rfloor-1$ with $f(0)=\prod_{i=1}^ra_i$. Then $f$ is defined over $\Q$, and
$$
\ord_x(f^m-h)\geq \left\lfloor\frac{r}{m}\right\rfloor \geq r-d.
$$

Let $b$ be the lowest common denominator of the coefficients of $f$.  Let $\psi:C\to\PP^1$ be the rational function defined by
$$
\psi:=\frac{b(y-f)}{x^{r-d}}.
$$
Then one computes \cite[Lemma~3.5]{levin07} that $\psi$ has degree $d$.

Let $\Delta_0$ be the product of prime numbers dividing the discriminant of $h$. Having chosen $a_1,\ldots, a_r$ properly, it can be shown \cite[Lemma~3.3]{levin07} that there exists an integer $c_0$ such that, for each integer $c\equiv c_0 \pmod{\Delta_0}$, the point $Q_c:=\psi^{-1}(c)$ has the property that $g_2(Q_c),\dots,g_r(Q_c)$ define classes in $\Sel^m(\Q(Q_c))$.

If we define $\phi:C\to\PP^1$ by
$$
\phi:=\frac{\psi-c_0}{\Delta_0},
$$
then $\phi$ also has degree $d$ and, for all $t\in\N$, the point $P_t:=\phi^{-1}(t)$ satisfies condition \eqref{SelCond} from Theorem~\ref{variant1} with respect to the functions $g_2,\dots,g_r$.

Finally, it follows from \cite[Lemma~3.4]{levin07} that 
$$
\Disc(\mathbb{Q}(P_t))=O(t^{(m+1)d-1})
$$
and $\mathbb{Q}(P_t)$ has at most two real places for $t\gg 0$. The result follows from Theorem~\ref{variant1}.
\end{proof}


\subsection*{Acknowledgements}

The first author warmly thanks Yuri Bilu for very inspiring conversations, and for his feedback on a preliminary version of this note.



\providecommand{\bysame}{\leavevmode\hbox to3em{\hrulefill}\thinspace}


\bigskip

\textsc{Jean Gillibert}, Institut de Math{\'e}matiques de Toulouse, CNRS UMR 5219, 118 route de Narbonne, 31062 Toulouse Cedex 9, France.

\emph{E-mail address:} \texttt{jean.gillibert@math.univ-toulouse.fr}
\medskip

\textsc{Aaron Levin}, Department of Mathematics, Michigan State University, 619 Red Cedar Road, East Lansing, MI 48824.

\emph{E-mail address:} \texttt{adlevin@math.msu.edu}


\end{document}